\documentclass[sn-mathphys-num]{sn-jnl}
\usepackage{mathtools}
\usepackage[numbers]{natbib}
\usepackage{graphicx}
\usepackage{multirow}
\usepackage{amsmath,amssymb,amsfonts, amsthm,  mathrsfs}
\usepackage[title]{appendix}
\usepackage{xcolor}
\usepackage{textcomp}
\usepackage{manyfoot, booktabs}
\usepackage{algorithm, algorithmicx,  algpseudocode,  listings}
\usepackage[normalem]{ulem}

\usepackage{comment, soul}

% Andersen Ang latex short cuts ver. 2023-Nov-11
\usepackage{bm}                 % enable bold-ed math symbol
%%%%%%%% Definitions / operators %%%%%%%%
\DeclareMathOperator*{\argmin}{\textrm{argmin}\,} % argmin
\DeclareMathOperator*{\argmax}{\textrm{argmax}\,} % argmax
      % epi
     % conical hull
     % convex hull
      % convex 
     % diag
 	 % Diag
     % distance
\def\dom             {\textrm{dom}} 	 % domain
 	 % Exp
\def\grad            {\textrm{grad}}     % grad   
     	 % Id
      % level set
   % logdet
   % median
            % All one vector or matrix
\def\proj            {\textrm{proj}}     % proj
     % prox
       % relative interior
     % rank
     % sign
\def\st              {\,\textrm{s.t.}\,} % such that (with spaces before/after)
     % support
\def\tr              {\textrm{Tr}}       % trace
\def\zeros           {\bm{0}}            % All zero vector or matrix
%%%%%%%% Blackboard Capital Bolded font %%%%%%%%
% *** \IR means real number set
        %
 		% ball
 		% complex set
 		% 
 		% Expectation IE
 		% 
 		% 
 		% 
 		% 
 		% 
 		% 
 		% 
 		% 
\def\IN {\mathbb{N}} 		% Natural number IN
 		% 
 		% Probability operator IP
 		% 
\def\IR  {\mathbb{R}} 	    % Real set
  % extended reals
 			% Rd
\def\IRm {\IR^{m}} 			% Rm
\def\IRn {\IR^{n}} 			% Rn 
\def\IRr {\IR^{r}} 			% Rd
\def\IRmn{\IR^{m \times n}} % R m times n
 % R m-by-m matrix
 % R n-by-n matrix
\def\IRmr{\IR^{m \times r}} % R m-by-r matrix
\def\IRrn{\IR^{r \times n}} % R r-by-n matrix

\def\IS {\mathbb{S}} 		% 
 		% 
 		% 
 		% 
 		% 
 		% 
 		% 
 		% 
%%%%%%%% Mahts Boldfont Capital %%%%%%%% 
\def\A {\bm{A}}
\def\B {\bm{B}}

\def\D {\bm{D}}

\def\H {\bm{H}} 
\def\I {\bm{I}}

\def\M {\bm{M}}

\def\P {\bm{P}}

\def\S {\bm{S}}

\def\W {\bm{W}}

\def\Z {\bm{Z}}

\def\a {\bm{a}}
\def\b {\bm{b}}
\def\c {\bm{c}}

\def\e {\bm{e}}

\def\h {\bm{h}}

\def\m {\bm{m}}

\def\p {\bm{p}}
\def\q {\bm{q}}
\def\r {\bm{r}}

\def\t {\bm{t}}
\def\u {\bm{u}}
\def\v {\bm{v}}
\def\w {\bm{w}}
\def\x {\bm{x}} 
\def\y {\bm{y}}
\def\z {\bm{z}}
%%%%%%%% Caligraphic font : Curve Boldfont Capital letter 

\def\cE { \mathcal{E}}

\def\cM { \mathcal{M}}
\def\cN { \mathcal{N}}
\def\cO { \mathcal{O}}
\def\cP { \mathcal{P}}

\def\cR { \mathcal{R}}

\def\cU { \mathcal{U}}

\def\cX { \mathcal{X}}

%%%%%%%% Bolded Greek characters

% * * * small
\def\balpha   {\boldsymbol{\alpha}}

\def\bzeta    {\boldsymbol{\zeta}}

\def\bxi      {\boldsymbol{\xi}}

\newtheorem{theorem}{Theorem}
\newtheorem{proposition}[theorem]{Proposition}

\newtheorem{remark}{Remark}
\newtheorem{lemma}[theorem]{Lemma}
\newtheorem{definition}{Definition}

\raggedbottom

\begin{document}

\title[Chordal-NMF with Riemannian MU]{A new Riemannian Multiplicative Updates for Chordal Nonnegative Matrix Factorization}

\author*[1]{\fnm{Flavia} \sur{Esposito}}\email{flavia.esposito@uniba.it}
\equalcont{These authors contributed equally to this work.}

\author[2]{\fnm{Andersen} \sur{Ang}}
\equalcont{These authors contributed equally to this work.}

\affil*[1]{\orgdiv{Department of Mathematics}, \orgname{Università degli Studi di Bari Aldo Moro}, \orgaddress{\city{Bari}, \country{Italy}}}

\affil[2]{\orgdiv{School of Electronics and Computer Science}, \orgname{University of Southampton}, \orgaddress{\city{Southampton}, \country{United Kingdom}}}

\abstract{Nonnegative Matrix Factorization (NMF) is the problem of approximating a given nonnegative matrix $\M$ through the product of two nonnegative low-rank matrices $\W$ and $\H$. 
Traditionally NMF is tackled by optimizing a specific objective function evaluating the quality of the approximation. 
This assessment is often done based on the Frobenius norm (F-norm).
In this work, we argue that the F-norm, as the ``point-to-point'' distance, may not always be appropriate. 
Viewing from the perspective of cone, NMF may not naturally align with F-norm.
So, a ray-to-ray chordal distance is proposed as an alternative way of measuring the 
quality of the approximation.
As this measure corresponds to the Euclidean distance on the sphere, it motivates the use of  manifold optimization techniques.
We apply Riemannian optimization technique to solve chordal-NMF by casting it on a manifold.
Unlike works on Riemannian optimization that require the manifold to be smooth, the nonnegativity in chordal-NMF defines a non-differentiable manifold.
We propose a Riemannian Multiplicative Update (RMU), 
and showcase the effectiveness of the chordal-NMF on synthetic and real-world datasets.}
\keywords{Nonnegative Matrix Factorization, Manifold, Chordal distance, Nonconvex Optimization, Multiplicative Update, Riemannian gradient.}
%%\pacs[JEL Classification]{D8, H51}
\pacs[MSC Classification]{15A23, 78M50, 49Q99, 90C26, 90C30}
\maketitle

\section{Introduction}\label{sec:intro}
Given a nonnegative matrix $\M \in \IRmn_+$ and a  rank $r \leq \min\{m,n\}$,
the goal of Nonnegative Matrix Factorization (NMF) is to find factor matrices $\W \in \IRmr_+$ and $\H \in \IRrn_+$ such that $\M \approx \W\H$~\cite{gillis2020nonnegative}. 
NMF is commonly achieved by minimizing the Frobenius norm (F-norm) $\| \M - \W \H \|_{\textrm{F}} = \sqrt{\sum_{ij} (M_{ij} - (WH)_{ij})^2}$ (where $M_{ij}$ is the $(i,j)$th-entry of $\M$) that measures the quality of the approximation.

For $\M\approx\W\H$ with nonnegativity constraints, the point cloud $\M$ is contained within a polyhedral cone generated by the $r$ columns of $\W$ with nonnegative weights encoded in $\H$, see Fig.~\ref{fig:cone}.
This conic view of NMF suggests that the point-to-point distance $M_{ij} - (\W\H)_{ij}$ in the F-norm does not naturally fit NMF.
In this work, we propose to replace F-norm by a ray-to-ray distance that we call \textit{chordal distance}, detailed in the next section.
We are interested in solving
\begin{equation}\label{def:chordal_NMF}
\begin{array}{l}
    \displaystyle
\hspace{-0.3cm}
\argmin\limits_{\W\geq \zeros,\H\geq \zeros}
\Bigg\{
F(\W,\H)
\coloneqq
\dfrac{1}{n}
\displaystyle \sum\limits_{j=1}^n
\bigg(
1-
\dfrac{\langle \m_{:j} , \W\h_{:j} \rangle }{\| \m_{:j} \|_2 \|  \W\h_{:j} \|_2}
\bigg)
\Bigg\},
\end{array}
\tag{Chordal-NMF}
\end{equation}
where the objective function $F:\IRmr\times\IRrn\to\IR$ is defined as the chordal distance between $\W\H$ and $\M$, 
$\| \cdot \|_2$ is the Euclidean norm, 
$\langle \cdot, \cdot\rangle$ is the Euclidean inner product, 
and $\m_{:j}$ is the $j$th column of $\M$.
The nonnegativity constraints $\geq \zeros$ is element-wise, where $\zeros$ is zero matrix of the appropriate size.
For simplicity, we assume $\M \in \IRmn_+$ has no zero column, and $\M$ is normalized  that $\| \m_{:j}\|_2=1$ in~\eqref{def:chordal_NMF}.
We give the motivation of using ray-to-ray distance in \textsection\ref{sec:chordal_distance}.
\begin{figure}[!ht]\centering
    \includegraphics[width=0.75\textwidth]{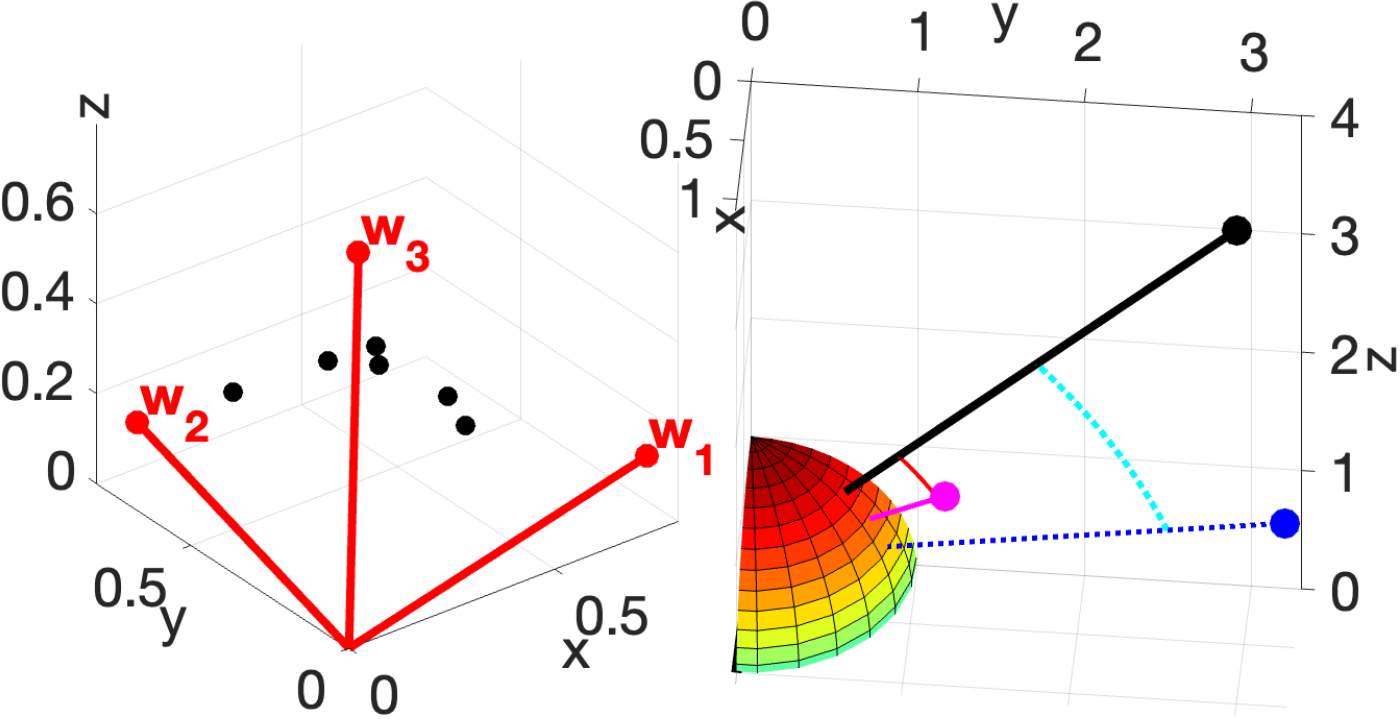}
    \caption{Left: Picture of a rank-3 NMF.
    Data points (in black) that represent the columns of $\M$ encapsulated by a polyhedral cone generated by $r=3$ columns $\w_1, \w_2, \w_3$ (in red).
    Right: The sphere $\IS^{2}_+$ in the nonnegative orthant $\IR^3_+$.
    The black dot is closer to the blue dot in Euclidean distance, but closer to the pink dot in chordal distance, which is equivalent to the geodesic arc length on the sphere.
    }
    \label{fig:cone}
\end{figure}

% Contribution
\noindent\textbf{Contribution.}
Our contributions are 3-folds.
\begin{enumerate}
\item We propose a new model \eqref{def:chordal_NMF}.
To the best of our knowledge, such problem is new and has not been studied in the NMF literature.

\item Solving Chordal-NMF is nontrivial, it is a nonsmooth nonconvex and block-nonconvex problem.
We propose a Block Coordinate Descent (BCD) algorithm with Riemannian Multiplicative Update (RMU) for solving Chordal-NMF.

\begin{itemize}
    \item We derive the Riemannian gradient of the objective function $F$.
    \item The nonnegativity constraints in Chordal-NMF introduce nondifferentiability in the manifold and make some manifold techniques ineffective.
    We propose RMU to solve the nonsmoothness issue in the optimization.
    In particular, we show that, if the initial variable in the algorithm is feasible, the whole sequence is guaranteed to be feasible.
\end{itemize}

\item We showcase the effectiveness of the Chordal-NMF in \textsection\ref{sec:Exp}.

\end{enumerate}

% Paper organization and notation
\noindent\textbf{Paper organization.}
In the remaining of this section, we describe the overall algorithmic framework on solving Chordal-NMF, as well as the motivation of the chordal distance.
In \textsection\ref{sec:tool} we review Riemannian optimization techniques.
In \textsection\ref{sec:H} and \textsection\ref{sec:W} we discuss how we update the variable $\H$ and $\W$, respectively.
\textsection\ref{sec:Exp} contains the experiments and \textsection\ref{sec:conc} sketches the conclusion.
\\

\noindent\textbf{Notation.}
We use $\{$italic, bold italic, bold italic capital$\}$ letters to denote $\{$scalar, vector, matrix$\}$ resp..
For a matrix $\A$, we denote $\a_{:j}$ its $j$th column, $\a_{j:}$ its $j$th row, and $\A^\top$ its transpose.
We denote $(\M)_{ij}$ or $M_{ij}$ the $ij$ component of $\M$.
The notation $\langle \bxi, \bzeta \rangle$ denotes the Euclidean inner product in the standard basis,
and $\| \bxi \|_2$ denotes the Euclidean norm of $\bxi$.
For a vector $\v \neq \zeros$, we denote $\hat{\v}=\v / \| \v \|_2$ the unit vector of $\v$.
If $\v = \zeros$ we define $\hat{\v}$ to be any unit vector.
We use $[\theta]_+ \coloneqq \max\{0,\theta\}$ element-wise.
\\

\noindent\textbf{Useful tools.}
We list two tools.
The first one is useful for deriving the Euclidean gradient of the objective function.

\begin{proposition}\label{prop:grad_f}
Let $f(\x) = \langle \A \x + \b, \c \rangle / \| \D \x + \e \|_2$ with $\A \in \IRmn$, $\b \in \IRm$, $\c \in \IRm, \D \in \IR^{p \times n}, \e \in \IR^p$ and $\D \x + \e \neq \zeros$, the Euclidean gradient $\nabla f$, is 
\[
\nabla f(\x)
=
\dfrac{\| \D \x + \e \|_2^2 \A^\top \c
-
\langle \A \x + \b, \c \rangle
\D^\top (\D \x + \e)}
{\| \D \x + \e \|_2^3 }
.
\]
We put the proof in the appendix.
\end{proposition}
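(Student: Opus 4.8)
The plan is to recognize $f$ as a quotient $f = g/h$, where $g(\x) = \langle \A\x+\b, \c\rangle$ is affine in $\x$ and $h(\x) = \|\D\x+\e\|_2$ is the Euclidean norm of the affine map $\x \mapsto \D\x+\e$. The hypothesis $\D\x+\e \neq \zeros$ guarantees $h(\x) > 0$, so the quotient is differentiable at $\x$ and I can apply the standard quotient rule $\nabla(g/h) = (h\,\nabla g - g\,\nabla h)/h^2$.

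First I would compute the two constituent gradients. Since $g(\x) = \c^\top\A\x + \c^\top\b$ is linear in $\x$, its gradient is immediately $\nabla g(\x) = \A^\top\c$. For the denominator, I set $\u = \D\x+\e$ and combine the elementary identity $\nabla_\u \|\u\|_2 = \u/\|\u\|_2$ with the chain rule through the linear map $\u = \D\x+\e$ (whose Jacobian is $\D$), which gives $\nabla h(\x) = \D^\top(\D\x+\e)/\|\D\x+\e\|_2$.

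Substituting these into the quotient rule yields
\[
\nabla f(\x) = \dfrac{\|\D\x+\e\|_2\,\A^\top\c - \langle \A\x+\b, \c\rangle\,\D^\top(\D\x+\e)\big/\|\D\x+\e\|_2}{\|\D\x+\e\|_2^2},
\]
and clearing the inner fraction by multiplying numerator and denominator by $\|\D\x+\e\|_2$ produces the stated formula. The computation is otherwise routine; the only point demanding care is the chain rule in the norm term, where the gradient of $\|\cdot\|_2$ fails to exist at the origin. This is exactly where the assumption $\D\x+\e \neq \zeros$ enters, and it is the single place where an unguarded calculation could break down.
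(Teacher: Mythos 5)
Your proof is correct, but it takes a genuinely different route from the paper's. You apply the quotient rule directly to $f = g/h$ with $g(\x) = \langle \A\x+\b, \c\rangle$ and $h(\x) = \|\D\x+\e\|_2$, using the chain rule to get $\nabla h(\x) = \D^\top(\D\x+\e)/\|\D\x+\e\|_2$. The paper instead uses a squaring trick: it sets $h(\x) = \big(f(\x)\big)^2$, computes $\nabla h$ twice --- once via the chain rule as $2f(\x)\nabla f(\x)$, and once via the quotient rule applied to $\big(\langle\A\x+\b,\c\rangle\big)^2 / \|\D\x+\e\|_2^2$, whose numerator and denominator are smooth quadratics --- and then solves the resulting identity for $\nabla f(\x)$. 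What the paper's route buys is that the square root is never differentiated, so the nonsmoothness of $\|\cdot\|_2$ at the origin never appears; what it costs is that isolating $\nabla f$ requires dividing by $2f(\x)$, i.e.\ by $\langle\A\x+\b,\c\rangle$, which is legitimate only where that inner product is nonzero --- an assumption the proposition does not impose (the gap is repairable, e.g.\ by noting that both sides of the identity are continuous on the open set $\{\x : \D\x+\e\neq\zeros\}$ and agree on the dense subset where $\langle\A\x+\b,\c\rangle\neq 0$, or trivially when $g \equiv 0$ since then $\A^\top\c = \zeros$). Your direct computation needs no such case distinction: the single hypothesis $\D\x+\e\neq\zeros$ gives $h(\x)>0$, hence differentiability of the quotient, and the formula follows everywhere it is asserted. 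In that sense your argument is both shorter and strictly more airtight than the one in the appendix.
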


We make use of tensor product in this work.
Let $V$ and $W$ be two vector spaces with inner product.
Let $\v \otimes \w \in V \otimes W$ be the tensor product of $\v \in V$, $\w \in W$, then 
\begin{equation}\label{formula:tensor}
\langle \v, \x \rangle \w = (\w \otimes \v) \x.
\end{equation}

\noindent\textbf{Block Coordinate Descent (BCD).}
We use BCD to solve the Chordal-NMF, see Algorithm~\ref{alg:BCD}, where we also hide the factor $1/n$.

\begin{algorithm}
\caption{BCD for Chordal-NMF, with a starting point $(\W_0, \H_0)$}
\label{alg:BCD}
\begin{algorithmic}[1]
\For{$k=1,2,...$}
\For{$j=1,2,...,n$ (update $\displaystyle \H_{k+1} $ column-wise)} 
        \State \fbox{h-subproblem} 
        $ \displaystyle \h_{:j, k+1} = \argmax_{\h\geq \zeros}  
        \dfrac{\langle \m_{:j} , \W\h \big\rangle }{\|  \W\h \|_2}.$
\EndFor
\State \fbox{W-subproblem}
$\displaystyle 
\W_{k+1}
=
\argmax_{\W\geq \zeros} \hspace{-1mm} 
\Bigg\{
F(\W;\H)
=
\displaystyle
\sum_{j=1}^n
\dfrac{\langle \m_{:j} , \W\h_{:j} \rangle }{\|  \W\h_{:j} \|_2}
\Bigg\}
\Bigg|_{\H = \H_{k+1}}.$
\EndFor
\end{algorithmic}
\end{algorithm}

\noindent\textbf{Chordal-NMF is asymmetric.}
Usually considering that NMF in F-norm is symmetric: $\| \M - \W \H \|_F = \| \M^\top - \H^\top \W^\top\|_F$, we can use the same procedure updating $\H$ to update $\W$ (with a transpose).
Chordal-NMF is asymmetric: it measures the cosine distance between $\m_{:j}$ and $\W\h_{:j}$, not between the rows $\m_{i:}$ and $\w_{i,:}\H$.
This leads to the W-subproblem and H-subproblem have different structure, and we use different approaches to solve the subproblems.
We solve H-subproblem column-wise as in line 3 in Algorithm~\ref{alg:BCD}, to be discussed in \textsection\ref{sec:H}.
We solve W-subproblem matrix-wise, to be discussed in \textsection\ref{sec:W}.

\subsection{Deriving and motivating chordal distance}\label{sec:chordal_distance}
From the conic view of NMF (see introduction), the geometry of chordal distance can be seen as the Euclidean distance between unit vectors on the sphere.
Writing $\| \M - \W \H \|_F^2 = \sum_j \| \m_{:j} - \W \h_{:j} \|_2^2$, 
if both $\m_{:j}$ and $\W \h_{:j}$ have unit $\ell_2$-norm, then by the Euclidean inner product $\langle \bxi, \bzeta \rangle = \| \bxi \|_2 \| \bzeta \|_2 \cos \theta(\bxi, \bzeta)$ with $\theta(\bxi, \bzeta)$ the angle between the vectors $\bxi$, $\bzeta$, we have
\begin{equation}\label{eqn:NMF_Fnorm_expand_unit}
\dfrac{1}{2} \| \M - \W \H \|_F^2
~=~
\displaystyle
\sum\limits_{j=1}^n 
\Big( 1 - \big\langle \m_{:j}, \W \h_{:j} \big\rangle 
\Big)
~=~
\displaystyle
\sum\limits_{j=1}^n 
\Big( 1 - \cos \theta(\m_{:j}, \W \h_{:j})
\Big)
.
\end{equation}
While the normalization of $\m_{:j}$ is assumed, the normalization of $\W \h_{:j}$ is important here.
Note that enforcing $\W \h_{:j}$ to have unit $\ell_2-$norm forms a nonconvex constraint.

The expression \eqref{eqn:NMF_Fnorm_expand_unit} tells that the normalized Euclidean distance is the cosine distance.
Cosine distance is insensitive to the length of vector, see Fig.\ref{fig:cone} for an example.
\begin{itemize}
\item \textbf{Quotient space interpretation.}
From \eqref{eqn:NMF_Fnorm_expand_unit}, we define a new objective in \eqref{def:chordal_NMF} for measuring the distance between $\m_{:j}$ and $\W\h_{:j}$ by angle.
%old
%The division of the norm $\| \W \h_{:j}\|_2$ in \eqref{def:chordal_NMF} collapses the elements in the set $\{\x : \x = \W\h\}$ to a single point, leading to the chordal distance a purely angle-based distance by ignoring the length information.
The division of the norm $\| \W \h_{:j}\|_2$ in \eqref{def:chordal_NMF} collapses all vectors  $\x$ that are proportional to $\W\h$ onto a single point on the unit sphere. 
In this way, the chordal distance becomes purely angle-based, since the length information is ignored.

\item \textbf{Haversine interpretation.}
Navy navigation~\cite{robusto1957cosine} makes use of the haversine $\textrm{hav}(\theta) \coloneqq (1-\cos \theta)/2$, where $\theta = d/r$ is the central angle, defined as the distance $d$ between two points along a great circle of the sphere, normalized by the radius of the sphere $r$.
The expression $1-\cos\theta$ in \eqref{eqn:NMF_Fnorm_expand_unit} is $\textrm{hav}(\theta)$ with $r=1$.

\item \textbf{Sphere interpretation.}
In $\IRm$ with Euclidean inner product $\langle \u, \v \rangle$ and norm $\| \u \|_2 = \sqrt{\langle \u, \u \rangle}$, the map $\bxi \mapsto \bxi / \| \bxi \|_2$ in \eqref{def:chordal_NMF}
sends nonzero vector $\bxi \in \IRm$ to the (nonconvex) unit sphere 
$
\IS^{m-1} 
\coloneqq\{
\bxi \in \IRm | \| \bxi \|_2 = \sqrt{\langle \bxi, \bxi \rangle}  = 1
\}
$.
Considering $\u \neq \zeros, \v \neq \zeros$, the following function
\[\bar{f}_{\text{chord}} :
\IRm \times \IRm \rightarrow  [0, 2] \cup \{\infty\} \cup  \{\pm \sqrt{-1}\infty\} \quad\text{as}\quad
\u,\v \mapsto 
\sqrt{
2 - 2   \dfrac{\langle \u , \v \rangle}{\| \u \|_2 \| \v \|_2} 
},
\]
can be seen as the Euclidean distance between two unit vectors $\hat{\u}$, $\hat{\v}$ on $\IS^{m-1}$, since
\[
\bar{f}_{\text{chord}}(\u,\v) 
=
\sqrt{
\Big\langle
\dfrac{\u}{\|\u\|_2}-\dfrac{\v}{\|\v\|_2}, \,
\dfrac{\u}{\|\u\|_2}-\dfrac{\v}{\|\v\|_2} 
\Big\rangle
}
=
\left\|\frac{\u}{\|\u\|_2}-\frac{\v}{\|\v\|_2}\right\|_2
=
\|\hat{\u}  - \hat{\v} \|_2.
\]
For this function it should be noted that:
\begin{itemize}
\item $\bar{f}_{\text{chord}}$ is undefined at $\u = \zeros$ and/or $\v = \zeros$, it take $\infty$ or complex values $\pm \sqrt{-1}\infty$.

\item $\bar{f}_{\text{chord}}$ is nondifferentiable with respect to (wrt.) parallel unit vectors as
$\bar{f}_{\text{chord}}(\hat{\u},\hat{\v}) 
\coloneqq \sqrt{2 - 2}
= |0|
$. 

\item In the Euclidean case, $\bar{f}_{\text{chord}}$ is non-convex wrt. $\u$, $\v$ and the pair $(\u,\v)$.
\end{itemize}
These undesirable properties lead to choice of chordal distance below.
\begin{definition}\label{def:chord_dist}
On $\IS^{m-1}$ with inner product $\langle \u, \v \rangle$,
we define
\[
f_{\text{sq-chord}} : \IS^{m-1} \times \IS^{m-1} \rightarrow [0, 4] 
:
\u,\v \mapsto 2 - 2 \langle \u , \v \rangle.
\]
\end{definition}
\end{itemize}

Given the above discussion and the fact that the chordal distance is non-Euclidean, we employ Riemannian optimization methods in this work.

\section{Background of optimization on manifold}\label{sec:tool}
In this section, we discuss about nonnegative-constrained optimization, then we review manifold optimization. 
Lastly, we present the Riemannian Multiplicative Update with some analysis.

Let $E$ be a linear space (e.g., $\IRn,\IRmr$) with an inner product $\langle\cdot,\cdot\rangle_E$ and an induced norm $\|\cdot\|_E$.
Given a differentiable function $f : E \to \IR$ and the nonnegative orthant $\cX \coloneqq \{ \x \in E \,|\, \x \geq \zeros\}$, consider the minimization problems
\[
\cP_0 : \argmin_{\x} f(\x) ~\st~ \x \in \cM, \, \x \in \cX
~~\overset{\text{restriction}}{\iff}~~
\cP_1 : \argmin_{\x \in \cM} f(\x) ~\st~ \x \in \cX,
\]
with the constraint set $\cM \coloneqq \{ \x \in E \,|\, h(\x) = 0 \}$ under the defining function $h(\x)$.
We focus on convex compact set $\cM$ being a smooth embedded submanifold of $\IRn$, where $h(\x)$ is many-times continuously differentiable.

In Euclidean optimization, $\x \in \cM $ is a constraint in $(\cP_0)$.
We ``remove'' such constraint using a restricted function $f\big|_\cM : \cM \to \IR$.
Here $f$ is the \textit{extension} of $f\big|_\cM$ that extends $\dom f\big|_\cM$ from $\cM$ to $E$.
In this way, $(\cP_0)$ can be written as a constrained manifold optimization $(\cP_1)$, which can further be converted into a unconstrained problem using indicator $\iota(\x) : E \to \IR \cup \{+\infty\}$  that $\iota(\x) = 0$ if $\x \in \cX$ and $\iota(\x) = +\infty$ if $\x \notin \cX$.
The problems are related, solving one of them will help solving the others.
Below we review approaches in manifold optimization on solving these problems.

\subsection{Nonnegative-constrained manifold optimization}\label{sec:tool:subsec:review}
Solving $(\cP_0)$ directly by manifold techniques is nontrivial.
Manifold optimization refers to an optimization on a smooth (differentiable) manifold.
The set $\cX$ is nonsmooth, same for $\cM_+ \coloneqq \cM \cap \cX$\footnote{For any point $\x$ in the boundary of $\cM_+$, there does not exist a $\IRn$-homeomorphic neighborhood in $\cM$.} (assumed nonempty).
Thus, several manifold optimization techniques do not have convergence guarantee for solving $(\cP_0)$.
The indicator $\iota$ is a nonsmooth convex lower semicontinuous (l.s.c) function~\cite{hosseini2017characterization}, making $(\cP_1)$ written in indicator form not satisfying the assumptions in some existing works in nonsmooth Riemannian optimization, e.g. Projected Gradient Descent on Riemannian Manifolds~\cite{hauswirth2016projected},
Riemannian proximal gradient~\cite{huang2022riemannian},
and Manifold proximal gradient~\cite{chen2020proximal}.
\\

\noindent\textbf{Dual methods: not strictly feasible.}
We can also solve $(\cP_0)$ by Riemannian Augmented Lagrangian multiplier (RALM)~\cite{liu2020simple} 
or solve $(\cP_1)$ using Riemannian Alternating Direction Method of Multipliers (RADMM)~\cite{kovnatsky2016madmm}.
The sequence generated by these methods is not strictly-primal-feasible and it is dangerous to use an infeasible variable to update the other block of variables in the BCD framework.
Therefore they are not feasible for our application.
\\

\noindent\textbf{Euclidean projected gradient is infeasible.}
We can solve $(\cP_0)$ by Euclidean projected gradient descent (EPG).
The projection $\proj_{\cM_+ }(  \z ) = {\displaystyle \argmin_{\x \in \cM_+}} \frac{1}{2}\| \x - \z \|_E^2$ takes an alternating projection method such as the Dykstra's algorithm \cite{boyle1986method} to solve, which can be expensive for our purpose.
\\

\noindent\textbf{Manifold projection-free method is expensive.}
Projection-free method, such as the Riemannian Frank-Wolfe (RFW)~\cite{weber2023riemannian}, has an expensive subproblem that requires the computation of exponential map and geodesic.
\\

\noindent\textbf{Our solution: Riemannian multiplicative update (RMU).}
In this work we provide a projection-free method that only requires the computation of the Riemannian gradient part $\grad f$.
To tackle these technical issues, we propose a cost-effective Riemannian multiplicative update (RMU) for solving Problem $(\cP_1)$.
Our approach is motivated by the research of NMF~\cite[\textsection8.2]{gillis2020nonnegative}.
The advantages of RMU are
\begin{itemize}
    \item The expensive projection $\proj_{\cM_+}$ is not required.
    \item Unlike the dual approaches, if the initial variable in the algorithm is feasible, the whole sequence is guaranteed to be feasible, see Proposition~\ref{prop:RMU_update}.
\end{itemize}
We introduce RMU in \textsection\ref{sec:tool:subsec:RMU}, before that we review  Riemannian optimization below.

% Background
\subsection{Background of Riemannian optimization}\label{sec:tools:subsec:background}
Riemannian optimization, or manifold optimization, has a long history~\cite{luenberger1972gradient,gabay1982minimizing,edelman1998geometry,absil2008optimization,udriste2013convex,rapcsak2013smooth,bento2017iteration,boumal2023introduction}.
So we give the minimum material on manifold optimization for the paper.
\\

\noindent\textbf{Submanifold and ambient space.}
Let $E$ be an Euclidean space with $dim(E)>k$, with inner product $\langle\cdot,\cdot\rangle$, and induced norm $\|\cdot\|$. 
Considering the smooth function $h : E \to \IR^k$, with $\textrm{D}h(\x)[\v] = \langle \grad h(\x), \v\rangle_E$ its differential with full
rank $k$ for all $\x$ such that $h(\x) = \zeros_k$, the set $\cM \coloneqq \{ \x \in E ~|~ h(\x) = \zeros_k \}$ is an embedded submanifold of $E$ of dimension $\dim E -k$ (see \cite[Ch7.7]{boumal2023introduction}).
In this work we focus on $h$ in the form $h(\x)=\langle\x,\x\rangle_E-1$.
For a manifold $\cM$ in $E$, we call $E$ the ambient space of $\cM$.
In this work, all ambient spaces are some specific linear vector space $E$, such as $\IRr$ and $\IRmr$.
\begin{itemize}
\item In the h-subproblem, $\cM$ is the ``shell'' of an ellipsoid, it has 1 dimension  lower than its ambient Euclidean space $\IRr$. 

\item In the W-subproblem, $\cM$ is the ``shell'' of a twisted spectrahedron.
\end{itemize}

\noindent\textbf{Tangent space and projection.}
Let $\textrm{ker}$ be the kernel of a matrix, the tangent space of $\cM$ at a reference point $\x$, denoted as $T_{\x}\cM$, is defined as the kernel of $\textrm{D}h(\x)$, i.e., 
$
T_{\x}\cM 
\coloneqq
\text{ker}\textrm{D}h(\x) 
=
\{ \v \in E ~|~ \textrm{D}h(\x)[\v]  = 0\}
$.
$T_{\x}\cM$ collects vectors $\v \in E$ that is tangent to $\cM$ at $\x$.
The orthogonal projection to $T_{\x}\cM$ is the linear map $\proj_{T_{\x}\cM}: \cM \to T_{\x}\cM$ characterized by the following properties \cite[Ch3, Def3.60]{boumal2023introduction}:
\begin{itemize}
    \item Range: $\textrm{Im}(\proj_{T_{\x}\cM})=T_{\x}\cM$;
    \item Projection: $\proj_{T_{\x}\cM}\circ \proj_{T_{\x}\cM}= \proj_{T_{\x}\cM}$;
\item Orthogonality: $\langle\u-\proj_{T_{\x}\cM}(\u),\y\rangle=0$ for all $\y\in T_{\x}\cM$ and $\u\in\cM$.
\end{itemize}
In this work, we have manifolds defined by smooth functions $h$ \cite[\textsection7.7]{boumal2023introduction}, we  use the
orthogonal projection $\proj_{T_{\x}\cM}: E \to T_{\x}\cM$, based on orthogonal decomposition of a vector space as
$\v = \proj_{T_{\x}\cM}(\v) +   \textrm{D}h(\x)^*[\alpha]$,
where $\textrm{D}h(\x)^*[\alpha]$ is the adjoint of $\textrm{D}h(\x)[\v]$, and $\alpha \in \IR$ plays the role of the dual variable (technically called covector) as the
solution of 
$ \argmin_{\alpha \in \IR} \| \v - \textrm{D}h(\x)^*[\alpha] \|_E^2 = (\textrm{D}h(\x)^*)^\dagger[\v]$, 
where $\dagger$ is pseudo-inverse.
This gives $\proj_{T_{\x}\cM}(\v) = \v - \textrm{D}h(\x)^* \big[
(\textrm{D}h(\x)^*)^\dagger[\v] \big]$.
Note that if the tangent space $\proj_{T_{\x}\cM}$ equals the ambient space, then the projection is not necessary.

\begin{remark}[Notational difference]
In \cite[Ch3, Def3.60]{boumal2023introduction} $\proj_x$ denotes orthogonal projection, here for indicating orthogonal projection from $E$ to $T_{\x}\cM$, we use $\proj_{T_{\x}\cM}$.
\\
\end{remark}

\noindent\textbf{Retraction.}
A point $\x$ that is originally sitting on a manifold $\cM$ may go outside $\cM$ after a gradient operation, so we need to pull it back onto $\cM$. 
This can be achieved by a particular smooth map, known as the retraction $\cR: T_{\x}\cM \to \cM$ with $\v\mapsto \cR_{\x}(\v)$.
A retraction maps a point on the tangent space $T_{\x}\cM$ onto $\cM$ such that each curve $c(t) = \cR_x(tv)$ satisfies $c(0) = \x$ and $c'(0) = \v$ \cite[Ch3, Def3.47]{boumal2023introduction}.
There are different ways to perform retractions, such as exponential map and metric retractions~\cite[\textsection3.6]{boumal2023introduction}.
However, in general, to obtain the exponential map of a manifold, one requires to solve a differential equation.
Thus, for computational efficiency, we consider the metric retraction $
\cR_{\x}(\v) = \argmin_{\y \in E}\| \x + \v - \y \|_E^2 
~\st~ h(\y) = 0$ \cite[\textsection7.7]{boumal2023introduction}.
Note that $\cR_{\x}(\v)$ is possibly non-unique and hard to compute.
Among all the retractions we focus on
\begin{equation}\label{metric_retraction}
\cR_{\x}(\v) \coloneqq 
\dfrac{\x+\v}{\|\x+\v\|_E}.
\tag{Metric retraction}
\end{equation}

\noindent\textbf{Riemannian and Euclidean gradient.}
Given a (restricted) function $f\big|_{\cM} : \cM \to \IR$ with its smooth extension $f : E \to \IR$,
let $\nabla f (\x)$ be the Euclidean gradient of $f$ in the standard Euclidean basis at a point $\x \in E$, the Riemannian gradient of $f\big|_{\cM}$ defined on $\cM$, denoted as $\grad f\big|_{\cM}$, at a point $\z$, wrt. the reference point $\x$, is the Euclidean gradient $\nabla f(\z)$ projected onto $T_{\x}\cM$.
That is $\grad f\big|_{\cM} (\z)  = \proj_{T_{\x}\cM} \nabla f(\z) $.
For the sake of cheap computational cost, we endow the manifold with the standard Euclidean metric\footnote{The complete Riemannian gradient is computed using metric tensor.
Let $g$ be the metric tensor of $\cM$ and let $G_{\x}$ denotes the matrix representation of $g$,
then $\grad f\big|_{\cM} (\z) = G^{-1}_{\x} \proj_{T_{\x}\cM} \nabla f(\z)$.}.
To ease the notation, we write $f\big|_{\cM}$ as $f$ taking the natural extension.

%  nonnegativity constraints
\subsection{Riemannian Multiplicative Update (RMU)}\label{sec:tool:subsec:RMU}
In this work, we propose a RMU for solving Chordal-NMF in the form of $(\cP_1)$.
RMU is a modified  Riemannian gradient descent (RGD) update: $\x_{k+1} = \cR_{\x_k}( \alpha \v_k)$, where $\v_k = -\grad f(\x_k)$ is the (negative) Riemannian gradient of $f$ at $\x_k$.
RMU is projection-free, and it guarantees the nonnegativity of $\x$ by selecting a special element-wise stepsize such that $\x_{k+1}$ stays within $\cX$.
Below we review Euclidean Multiplicative Update (EMU) and then we generalize it to the Riemannian case.
\\

\noindent\textbf{Euclidean MU (EMU).}
EMU was first proposed in~\cite{richardson1972bayesian,lucy1974iterative,daube1986iterative}, and gained popularity in NMF~\cite{gillis2020nonnegative}.
EMU can be done via a ``element-wise sign decomposition'' of the Euclidean gradient $\nabla f(\x) = \nabla^+ f(\x) - \nabla^- f(\x)$, where $\nabla^+ f(\x) \geq \zeros$ and $\nabla^- f(\x) \geq \zeros$.
Let denote $\odot$ the element-wise product and $\oslash $ the element-wise division, for solving a nonnegative-constrained Euclidean optimization problem, 
the EMU step has the form
\begin{equation}\label{eq:E-MU}
\x_{k+1} 
~=~
\x_k 
\odot
\nabla^-f(\x_k)
\oslash 
\nabla^+f(\x_k),
\tag{EMU}
\end{equation}
which is obtained by choosing an element-wise stepsize $\balpha=\x_k \oslash \nabla^+f(\x_k)$ in the Euclidean gradient descent step $\x_{k+1}=\x_k-\alpha\nabla f(\x_k)$, see~\cite{yoo2008orthogonal} for the derivation\footnote{In the literature~\eqref{eq:E-MU} is written as $\x_{k+1}=\x_k\dfrac{\nabla^-f(\x_k)}{\nabla^+f(\x_k)}$.
We do not use this convention because it confuses with metric retraction for our purpose.}.
\\

\noindent\textbf{RMU.}
Now we generalize EMU.
First, a Riemannian gradient $\grad f(\x)$ always admits element-wise sign decomposition\footnote{For any object $\b$, it can be written as $\b=\b^+-\b^-$ with $\b^+=\max(\zeros,\b)$ and $\b^-=\max(\zeros,-\b)$.}
$\grad f(\x) = \grad^+ f(\x) - \grad^- f(\x)$.

We generalize \eqref{eq:E-MU} to the Riemannian case in Proposition~\ref{prop:RMU_update}, where we prove that the nonnegativity of the update is preserved by RMU without projection.
The key idea of the proof is the metric retraction and the choice of step-size $\balpha$ that acts component-wise on the update direction $\v_k$.

\begin{proposition}[RMU]\label{prop:RMU_update}
Denote $\v_k = - \grad f(\x_k)$ the anti-parallel direction of the Riemannian gradient of a manifold $\cM$ at $\x_k$, and let $\cR_{\x_k}$ be the metric retraction onto $\cM$.
If a nonnegative $\x_k$ is updated by $\x_{k+1} = \cR_{\x_k}( \balpha \odot  \v_k)$
with an element-wise stepsize $\balpha=\x_k \oslash \grad^+ f(\x_k) \in E$, then $\x_{k+1} \geq \zeros$ and is on $\cM$.
\end{proposition}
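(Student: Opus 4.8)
The plan is to substitute the explicit \eqref{metric_retraction} formula and show that, under the prescribed element-wise stepsize, the numerator of the retraction collapses into exactly the \eqref{eq:E-MU} expression, which is manifestly nonnegative; the manifold membership will then come for free from the normalization built into the retraction. First I would write out the update as
\[
\x_{k+1}
=
\cR_{\x_k}(\balpha \odot \v_k)
=
\frac{\x_k + \balpha \odot \v_k}{\|\x_k + \balpha \odot \v_k\|_E}.
\]
Since the denominator is a nonnegative scalar, the sign pattern of $\x_{k+1}$ is governed entirely by the numerator $\x_k + \balpha \odot \v_k$, so it suffices to prove this vector is $\geq \zeros$.

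Next I would insert $\v_k = -\grad f(\x_k) = \grad^- f(\x_k) - \grad^+ f(\x_k)$ together with $\balpha = \x_k \oslash \grad^+ f(\x_k)$ and simplify entrywise. The crucial cancellation is that $\balpha \odot \grad^+ f(\x_k) = \x_k$ by the very definition of $\balpha$, so the $-\balpha \odot \grad^+ f(\x_k)$ contribution annihilates the leading $\x_k$, leaving
\[
\x_k + \balpha \odot \v_k
=
\x_k \odot \grad^- f(\x_k) \oslash \grad^+ f(\x_k).
\]
This is precisely the \eqref{eq:E-MU} form. Each factor on the right — namely $\x_k$, $\grad^- f(\x_k)$, and $\grad^+ f(\x_k)$ — is nonnegative by hypothesis and by the sign decomposition, hence their Hadamard product and quotient is nonnegative; dividing by the positive norm preserves this and yields $\x_{k+1} \geq \zeros$.

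For the second claim, I would simply invoke the normalization inherent in the metric retraction: $\cR_{\x_k}$ rescales its argument to unit Euclidean norm, so $\|\x_{k+1}\|_E = 1$ as soon as the numerator is nonzero. Because the manifold here is the unit sphere defined by $h(\x) = \langle \x, \x \rangle_E - 1 = 0$, this gives $h(\x_{k+1}) = 0$, i.e. $\x_{k+1} \in \cM$ immediately.

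The hard part will not be the algebra but the well-definedness of the update. The stepsize $\balpha$ requires $\grad^+ f(\x_k)$ to be strictly positive entrywise, and the retraction requires $\x_k + \balpha \odot \v_k \neq \zeros$; otherwise a division by zero or a vanishing denominator breaks the construction. I would handle this either through a standing assumption that the positive part of the gradient stays bounded away from zero along the iterates, or via the customary multiplicative-update safeguard of perturbing $\grad^+ f(\x_k)$ by a small positive constant, and I would explicitly flag the degenerate situations (a zero entry of $\x_k$ or of $\grad^- f(\x_k)$, or the numerator vanishing identically) as cases to be excluded or treated separately.
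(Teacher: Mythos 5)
Your proposal is correct and follows essentially the same route as the paper's proof: the same key cancellation $\balpha \odot \grad^+ f(\x_k) = \x_k$ collapses the numerator of the metric retraction into the \eqref{eq:E-MU} form $\x_k \odot \grad^- f(\x_k) \oslash \grad^+ f(\x_k)$, from which nonnegativity follows entrywise and membership in $\cM$ follows from the retraction's built-in normalization. Your closing caveats on well-definedness (strict positivity of the entries of $\grad^+ f(\x_k)$ and nonvanishing of the numerator) address a point the paper's own proof passes over in silence, and would be a worthwhile refinement.
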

    \begin{proof}
The term $\balpha \odot  (-\grad f(\x_k))$ with $\balpha=\x_k \oslash \grad^+ f(\x_k)$ can be computed as 
\[
\begin{array}{lll}
-\balpha \odot  \grad f(\x_k) 
&=
\big( \x_k \oslash \grad^+ f(\x_k) \big)  \odot  \big( \grad^- f(\x_k) - \grad^+ f(\x_k) \big)
\\
&=
\x_k \odot \grad^- f(\x_k) \oslash \grad^+ f(\x_k) - \x_k .
\end{array}
\]
The element-wise operations as linear transformations  in general do not preserve the tangent condition of a manifold into a point.
However here we can still apply~\eqref{metric_retraction} on $-\balpha \odot  \grad f(\x_k)$, the $\x_k$ terms got canceled and gives
\[
\begin{array}{ll}
\x_{k+1} 
=
\cR_{\x_k}\big( -\balpha \odot  \grad f(\x_k)\big) 
&=
\dfrac{\x_k - \balpha \odot  \grad f(\x_k)}{\| \x_k - \balpha \odot  \grad f(\x_k) \|_E}
\\
&= 
\dfrac{ \x_k \odot \grad^- f(\x_k) \oslash \grad^+ f(\x_k)}
{
\|  \x_k \odot \grad^- f(\x_k) \oslash \grad^+ f(\x_k) \|_E
}.    
\end{array}
\]
The ratio between $\grad^+ f$ and $\grad^- f$ is nonnegative as its parts are nonnegative by definition, the denominator is nonnegative, therefore $\x_{k+1}$ is nonnegative if $\x_k$ is nonnegative.
\end{proof}
 
Proposition~\ref{prop:RMU_update} also gives a convergence condition to a critical point: 
\begin{itemize}
    \item If the solution lies strictly inside the nonnegative orthant, this corresponds to $\grad f (\x_k) = \zeros$, which is equivalent to $\grad^- f(\x_k) = \grad^+ f(\x_k)$. 
    That is, the convergence of the sequence $\{\x_k\}_{k \in \IN}$ can be easily checked by comparing $\grad^- f(\x_k)$ and $\grad^+ f(\x_k)$.

    \item Otherwise, in the constrained setting, convergence is characterized by the KKT conditions on $\cM \cap \IRn_+$.
\end{itemize}

As RMU is projection-free, it has a lower per-iteration computational cost, making it suitable for Chordal-NMF.

We are now ready to move on to the explicit update of $\H$ and $\W$.

\section{Subproblems in the algorithm}\label{sec:algo}
In this section, we discuss how to solve the subproblems in Algorithm~\ref{alg:BCD}.

\subsection{Column-wise h-subproblem over ellipsoid}\label{sec:H}
Here we discuss how to solve the h-subproblem.
We re-formulate the h-subproblem on $\h$ over the ellipsoid  $\cE_{\W^\top\W}^{r-1}$ as
\begin{equation}\label{h-manifold-subproblem}
\displaystyle \argmin_{\h}
\Big\{
\phi(\h) 
\coloneqq
1 - \langle\m_{:j}, \W\h\rangle
\Big\}
~\st~
\h \in \IRr_+ \cap \cE_{\W^\top\W}^{r-1},
\tag{h-manifold-subproblem}
\end{equation}
where we assume $\W$ has full rank, the ellipsoid $\cE^{r-1}_{\W^\top\W} \subset \IRr$ defined by a Positive Definite (PD) matrix $\W^\top\W \in \IR^{r\times r}$ is
\begin{equation}\label{def:ellipsoid-h}
\cE_{\W^\top\W}^{r-1}
\coloneqq
\Big\{ 
\bxi \in \IRr ~\big|~ 
\big\langle \bxi ,  \bxi \big\rangle_{\cE_{\W^\top\W}^{r-1}} 
\coloneqq 
\big\langle \W^\top \W  \bxi ,  \bxi \big \rangle = 1
\Big\} 
\subset \IRr.
\tag{Ellipsoid}
\end{equation}
To ease notation, sometimes we write $\langle  \bxi , \bzeta \rangle_{\cE_{\W^\top\W}^{r-1}} $ as $\langle  \bxi , \bzeta \rangle_{\cE} $ 
and $\|  \bxi \|_{\cE_{\W^\top\W}^{r-1}} $ as $\|  \bxi \|_{\cE}$.
We remark that:
\begin{itemize}
    \item As a subset of $\IRr$, the set $\cE_{\W^\top\W}^{r-1}$ is a smooth submanifold~\cite[Def3.10]{boumal2023introduction} and we can show that the inner product $\langle  \bxi , \bzeta \rangle_{\cE} $ is a Riemannian metric~\cite[Prop3.54]{boumal2023introduction}.

    \item The set $\IRr_+ \cap \cE_{\W^\top\W}^{r-1}$ is a nonsmooth manifold: $\IRr_+$ has sharp corners at the boundary. 
    See \textsection\ref{sec:tool} for discussion on the issues caused by such non-smoothness.
\end{itemize}

\noindent\textbf{Tools on ellipsoid manifold.}
The tools to solve \eqref{h-manifold-subproblem} by RMU is summarized in Table~\ref{table:manifold} and detail in the following paragraphs.
\begin{table}[h]
\centering
\caption{Summary of objects.
Top: for $\h$ and $\A = \W^\top \W$.
Bottom: For $\W$ and $\A_j = \h_{:j}\h_{:j}^\top$.
}
\label{table:manifold}
\renewcommand*{\arraystretch}{1.6}
\begin{tabular}{ll}
Name / Reference &   Definition / expression         
\\ \hline \hline
Manifold of $\h$ \eqref{def:ellipsoid-h}  & 
$\cE_{\A}^{r-1} \coloneqq 
\Big\{
\bxi \in \IRr ~\big|~ \langle \A \bxi, \bxi \big\rangle = 1
\Big\}
$
\\
Tangent space of $\cE_{\A}^{r-1}$ at $\bzeta$ 
\eqref{def:TangentEllipse}
&
$T_{\bzeta} \cE_{\A}^{r-1}  \coloneqq 
\Big\{
\bxi \in \IRr ~\big|~ \langle \A \bxi, \bzeta \big\rangle = 0
\Big\}$
\\
Project $\bxi$ onto $T_{\bzeta}\cE_{\A}^{r-1}$ (Proposition\,\ref{prop:projTE_h})  & 
$\proj_{T_{\bzeta}\cE_{\A}^{r-1}}(\bxi)
=
\bigg(
\I_r-
\dfrac{(\A\bzeta)^{\otimes2}}{\|\A\bzeta\|_2^2}
\bigg)\bxi
$
\\
Retraction (Proposition\,\ref{prop:restract_h})
& 
$\cR_{\bzeta}(\bxi) 
~=~ 
\dfrac{\bzeta + \bxi}{\| \bzeta + \bxi \|_\cE} 
$
\\ \hline 
Manifold of $\W$ \eqref{eq:matrix_man} &
$\cM_j \coloneqq \Big\{
\W \in \IRmr ~\big|~ \langle \W, \W \big\rangle_{\A_j^{1/2}} = 1
\Big\}
$
\\
Tangent space of $\cM_j$ at $\Z$ \eqref{def:TangentMatrixMan}
& 
$T_{\Z} \cM_j  \coloneqq 
\Big\{
\W \in \IRmr ~\big|~ \langle \W ,\Z \A_j \big\rangle_\textrm{F} = 0
\Big\}$
\\
Project $\W$ onto $T_{\Z}\cM_j$ (Proposition\,\ref{prop:orth_projMatrixMan}) & 
$\proj_{T_{\Z}\cM_j}(\W)
= 
\bigg(\I-
\dfrac{ (\Z\A_j)^{\otimes 2}}{
\|\Z\A_j\|_\textrm{F}^2}\bigg) 
\W
$
\\
Retraction (Proposition\,\ref{prop:retract_Mj})
&
$
\cR_{\Z}(\W)
= 
\dfrac{\Z+\W}{\|\Z+\W\|_{\A_j^{1/2}}
}
$
\end{tabular}
\end{table}

% Def of Tenagent of Ellip
\noindent\textbf{Tangent space.}
Let $\A \coloneqq \W^\top\W$ and the function $h(\bxi) = \langle \A\bxi, \bxi \rangle - 1$, we denote $\cE_{\A}^{r-1}$ as the manifold and its differential $\textrm{D}h(\bxi)[\bzeta] = 2 \langle \A\bxi, \bzeta \rangle$.
The tangent space of $\cE_{\A}^{r-1}$ at a reference point $\bzeta \in \cE_{\A}^{r-1}$, is defined as the set
\begin{equation}\label{def:TangentEllipse}
T_{\bzeta}\cE_{\A}^{r-1}
\coloneqq  \{ \bxi \in \IRr ~|~ \langle \A\bzeta, \bxi \rangle  = 0 \}
= \textrm{Ker\,D}h(\bzeta).
\end{equation}

\noindent\textbf{Projection onto tangent space, and Retraction.}
Consider $\textrm{D}h(\bxi)[\bzeta]$, we define its adjoint operator as $\textrm{D}h(\bxi)^*[{\alpha}]=2{\alpha}\A\bxi$, 
where $\alpha$ takes the value $\overline{\alpha}= \frac{1}{2}\frac{\langle\A\bzeta,\bxi\rangle}{\|\A\bzeta\|_2^2}$ obtained by solving the least squares problem $\overline{\alpha} =  \displaystyle \argmin_{\alpha\in\IR}{\big\| \bxi-\textrm{D}h(\bzeta)^*[\alpha] \big\|_2^2}$.
Now by the previously defined $\textrm{D}h(\bxi)^*[{\alpha}]$ and $\overline{\alpha}$, 
we have the projector 
$
\proj_{T_{\bzeta}\cE_{\A}^{r-1}}(\bxi)
=
\bxi-\textrm{D}h(\bzeta)^*[\overline{\alpha}]
$ 
as shown in the following proposition.
\\

\begin{proposition}\label{prop:projTE_h}
The map $\proj_{T_{\bzeta}\cE_{\A}^{r-1}}:\IRr\rightarrow T_{\bzeta}\cE_{\A}^{r-1}$ that 
bring $\bxi$ onto the tangent space at the reference point, $\bzeta$, defined as 
\begin{equation}\label{prop:orth_proj}
\proj_{T_{\bzeta}\cE_{\A}^{r-1}}(\bxi)
=
\bxi-\dfrac{\langle\A\bzeta,\bxi\rangle}{\|\A\bzeta\|_2^2}\A\bzeta,
\tag{proj-ellips}
\end{equation}
is an orthogonal projector.
\end{proposition}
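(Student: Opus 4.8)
The plan is to verify the three defining properties of an orthogonal projector onto the tangent space listed in \textsection\ref{sec:tools:subsec:background} (range, idempotence, and orthogonality of the residual) for the explicit map in \eqref{prop:orth_proj}. Write $P \coloneqq \proj_{T_{\bzeta}\cE_{\A}^{r-1}}$ for brevity. Before checking anything, I would confirm the formula is well-defined: since $\bzeta \in \cE_{\A}^{r-1}$ satisfies $\langle \A\bzeta, \bzeta\rangle = 1$, we have $\bzeta \neq \zeros$, and as $\A = \W^\top\W$ is PD (because $\W$ has full rank), it follows that $\A\bzeta \neq \zeros$ and $\|\A\bzeta\|_2^2 > 0$, so the denominator never vanishes.

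The three verifications are then short direct computations. For the \emph{range}, I would evaluate $\langle \A\bzeta, P(\bxi)\rangle$: the scalar in the correction term is chosen exactly so that
\[
\langle \A\bzeta, P(\bxi)\rangle
=
\langle \A\bzeta, \bxi\rangle
-
\frac{\langle\A\bzeta,\bxi\rangle}{\|\A\bzeta\|_2^2}\langle \A\bzeta, \A\bzeta\rangle
=
0,
\]
so $P(\bxi) \in T_{\bzeta}\cE_{\A}^{r-1}$ by \eqref{def:TangentEllipse}; conversely any $\bxi \in T_{\bzeta}\cE_{\A}^{r-1}$ has $\langle\A\bzeta,\bxi\rangle = 0$, hence $P(\bxi) = \bxi$, showing $P$ fixes the tangent space pointwise and $\textrm{Im}(P) = T_{\bzeta}\cE_{\A}^{r-1}$. \emph{Idempotence} is then immediate, since $P(\bxi)$ lies in $T_{\bzeta}\cE_{\A}^{r-1}$ and $P$ fixes that set, giving $P(P(\bxi)) = P(\bxi)$. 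For \emph{orthogonality}, the residual $\bxi - P(\bxi) = \tfrac{\langle\A\bzeta,\bxi\rangle}{\|\A\bzeta\|_2^2}\A\bzeta$ is a scalar multiple of $\A\bzeta$, so for any $\y \in T_{\bzeta}\cE_{\A}^{r-1}$ we get $\langle \bxi - P(\bxi), \y\rangle = \tfrac{\langle\A\bzeta,\bxi\rangle}{\|\A\bzeta\|_2^2}\langle\A\bzeta,\y\rangle = 0$.

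The only points needing care — rather than genuine obstacles — are well-definedness (handled above) and being explicit that the orthogonality is taken with respect to the ambient Euclidean inner product, consistent with the Euclidean metric adopted in \textsection\ref{sec:tools:subsec:background}; indeed $T_{\bzeta}\cE_{\A}^{r-1}$ is by definition the Euclidean orthogonal complement of $\textrm{span}\{\A\bzeta\}$, so $P$ is simply the hyperplane projector $\I_r - \A\bzeta(\A\bzeta)^\top/\|\A\bzeta\|_2^2$, which matches the tensor form $\I_r - (\A\bzeta)^{\otimes 2}/\|\A\bzeta\|_2^2$ recorded in Table~\ref{table:manifold} via the identity \eqref{formula:tensor}. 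An alternative route would be to match $P(\bxi)$ against $\bxi - \textrm{D}h(\bzeta)^*[\overline{\alpha}]$ using the least-squares $\overline{\alpha}$ derived just before the statement and then invoke the general orthogonal-decomposition theory, but the direct three-property check is the cleanest and self-contained.
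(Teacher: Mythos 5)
Your proof is correct, and it follows the same overall skeleton as the paper's: both verify the three defining properties of an orthogonal projector (range, idempotence, orthogonality) listed in \textsection\ref{sec:tools:subsec:background}. The execution differs in a way worth noting. The paper first rewrites the map in matrix form $\P = \I_r - (\A\bzeta)^{\otimes 2}/\|\A\bzeta\|_2^2$ via \eqref{formula:tensor} and then argues at the matrix level: range by characterizing the fixed points of $\P$, idempotence by the explicit computation $\P^2=\P$, and orthogonality by observing $\P^\top=\P$ and invoking the fact that a symmetric idempotent matrix is an orthogonal projector (citing \cite[\textsection0.9.13]{horn2012matrix}). You instead stay coordinate-free: you show $\langle\A\bzeta, P(\bxi)\rangle=0$ so $P$ maps into $T_{\bzeta}\cE_{\A}^{r-1}$ and fixes it pointwise (which gives the range \emph{and} makes idempotence an immediate structural corollary rather than a computation), and you verify orthogonality directly by noting the residual $\bxi-P(\bxi)$ is parallel to $\A\bzeta$, hence orthogonal to every tangent vector --- which is literally the orthogonality condition as the paper states it in \textsection\ref{sec:tools:subsec:background}, rather than the equivalent matrix-symmetry characterization. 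Your route buys self-containedness (no external matrix-analysis citation) and an explicit well-definedness check ($\bzeta\in\cE_{\A}^{r-1}$ forces $\A\bzeta\neq\zeros$), a point the paper uses but does not justify; the paper's route buys the matrix expression that is then recorded in Table~\ref{table:manifold} and reused later for the Riemannian gradient, so the matrix form is not wasted effort there. Your closing remark correctly reconciles the two views via \eqref{formula:tensor}.
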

\begin{proof}
Using formula \eqref{formula:tensor}, we have \eqref{prop:orth_proj} equals to the following matrix form
\[
\bxi-\dfrac{\langle\A\bzeta,\bxi\rangle}{\|\A\bzeta\|_2^2}\A\bzeta 
~=~
\bigg(
\I_r-
\dfrac{(\A\bzeta)^{\otimes 2}}{\|\A\bzeta\|_2^2}
\bigg)\bxi
~\eqqcolon~ \P \bxi
.
\]
To show that \eqref{prop:orth_proj} is a projector onto the tangent space of the reference point $\bzeta$ we use the definition of orthogonal projector in \textsection\ref{sec:tools:subsec:background}.

First, the range of $\proj_{T_{\bzeta}\cE_{\A}^{r-1}}$, denoted as
$\textrm{Im}\big(
    \proj_{T_{\bzeta}\cE} 
    \big)$, is exactly $T_{\bzeta}\cE_{\A}^{r-1}$, since
\[      
\bxi = 
\bxi-\dfrac{\langle\A\bzeta,\bxi\rangle}{\|\A\bzeta\|_2^2}\A\bzeta 
~\iff~
    \dfrac{\langle\A\bzeta,\bxi\rangle}  {\|\A\bzeta\|_2^2}  \A\bzeta
    = 
    \zeros
~\overset{\A\bzeta\neq\zeros}{\iff}~
\bxi \in T_{\bzeta}\cE_{\A}^{r-1}
.
\]
Second, we show that $\proj_{T_{\bzeta}\cE_{\A}^{r-1}}$ is idempotent by showing $\P$ is idempotent
\[
\P^2
=
\I_r
- \dfrac{(\A\bzeta)^{\otimes 2}}{\|\A\bzeta\|_2^2}
~ \underbrace{-\, \dfrac{(\A\bzeta)^{\otimes 2}}{\|\A\bzeta\|_2^2}
+ 
\dfrac{(\A\bzeta)^{\otimes 2}}{\|\A\bzeta\|_2^2} \dfrac{(\A\bzeta)^{\otimes 2}}{\|\A\bzeta\|_2^2}
}_{=\zeros}
= \P.
\]
We show $\proj_{T_{\bzeta}\cE_{\A}^{r-1}}$ is orthogonal by showing $\P$ is orthogonal.
As $\A$ is symmetric, so $\P^\top = \P$.
Being symmetric and idempotent, $\P$ is orthogonal \cite[\textsection0.9.13]{horn2012matrix}.
\end{proof}

% Retraction
\begin{proposition}[metric retraction]\label{prop:restract_h}
The map $\cR_{\bzeta}:T_{\bzeta}\cE_{\A}^{r-1}\rightarrow \cE_{\A}^{r-1}$ defined as 
\begin{equation}\label{prop:retract_ellipse}
\cR_{\bzeta}(\bxi) 
~=~
\dfrac{\bzeta + \bxi}{\| \bzeta + \bxi \|_\cE}
\overset{T_{\bzeta}\cE_{\A}^{r-1}}{~=~} 
\dfrac{\bzeta + \bxi}{\sqrt{1 + \| \bxi\|_\cE^2}},
\tag{$\cR$-ellipsoid}
\end{equation}
is a metric retraction for the manifold $\cE_{\A}^{r-1}$.  
\end{proposition}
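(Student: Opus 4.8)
The plan is to establish that $\cR_{\bzeta}$ satisfies the definition of a metric retraction, which amounts to three checks: that its image lies on $\cE_{\A}^{r-1}$, that it obeys the two retraction axioms $\cR_{\bzeta}(\zeros)=\bzeta$ and $\frac{d}{dt}\cR_{\bzeta}(t\bxi)\big|_{t=0}=\bxi$, and that it coincides with the metric-projection form $\argmin_{\y}\|\bzeta+\bxi-\y\|_\cE^2$ subject to $\y\in\cE_{\A}^{r-1}$. All computations are carried out in the ellipsoid inner product $\langle\bxi,\bzeta\rangle_\cE=\langle\A\bxi,\bzeta\rangle$; this is the natural choice because $\cE_{\A}^{r-1}$ is precisely the unit sphere of the ambient space $(\IRr,\langle\cdot,\cdot\rangle_\cE)$, so the relevant ``ambient distance'' for the metric retraction is the $\cE$-norm.

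The first step is the Pythagorean identity that drives the rest. Expanding $\|\bzeta+\bxi\|_\cE^2=\langle\A\bzeta,\bzeta\rangle+2\langle\A\bzeta,\bxi\rangle+\langle\A\bxi,\bxi\rangle$ and invoking $\bzeta\in\cE_{\A}^{r-1}$ (so $\langle\A\bzeta,\bzeta\rangle=1$) together with the tangency condition $\bxi\in T_{\bzeta}\cE_{\A}^{r-1}$ (so $\langle\A\bzeta,\bxi\rangle=0$) yields $\|\bzeta+\bxi\|_\cE^2=1+\|\bxi\|_\cE^2$. This simultaneously proves the second equality in \eqref{prop:retract_ellipse}, shows $\bzeta+\bxi\neq\zeros$ so that $\cR_{\bzeta}$ is well-defined and smooth, and gives the image condition at once, since $\|\cR_{\bzeta}(\bxi)\|_\cE=1$ means $\cR_{\bzeta}(\bxi)\in\cE_{\A}^{r-1}$.

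Next I would verify the retraction axioms. Centering is immediate from $\|\bzeta\|_\cE=1$. For the velocity condition I set $c(t)=(\bzeta+t\bxi)/g(t)$ with $g(t)=\|\bzeta+t\bxi\|_\cE$, note $g(0)=1$, and differentiate $g(t)^2=1+2t\langle\A\bzeta,\bxi\rangle+t^2\|\bxi\|_\cE^2$ to obtain $g'(0)=\langle\A\bzeta,\bxi\rangle=0$; the quotient rule then collapses to $c'(0)=\bxi$. It is exactly here that the tangency hypothesis is indispensable, since without it the $\bzeta\,g'(0)$ term would survive and the velocity would be wrong.

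Finally I would identify the formula as the metric retraction by solving $\min_{\y}\|\p-\y\|_\cE^2$ over $\|\y\|_\cE=1$ with $\p=\bzeta+\bxi$. Expanding the objective as $\|\p\|_\cE^2-2\langle\p,\y\rangle_\cE+1$ reduces the problem to maximizing $\langle\p,\y\rangle_\cE$ on the $\cE$-unit sphere, and Cauchy--Schwarz in the $\cE$-inner product gives the unique maximizer $\y^\star=\p/\|\p\|_\cE$, which is the claimed map. I do not expect a genuine obstacle here; the one point that must be handled with care is keeping every inner product, projection, and Cauchy--Schwarz step in the $\cE$-metric rather than the standard Euclidean one. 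Equivalently, the substitution $\u=\A^{1/2}\bxi$ transports the whole argument to the standard unit sphere $\IS^{r-1}$, where all three facts are classical.
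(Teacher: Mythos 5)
Your proof is correct, and its core computation --- the curve $c(t)=(\bzeta+t\bxi)/\|\bzeta+t\bxi\|_\cE$ together with the checks $c(0)=\bzeta$ and $c'(0)=\bxi$ --- is exactly the paper's own argument in Appendix~\ref{app:proof_retract_h}. The difference is one of completeness rather than of route. The paper starts directly from the already-simplified form $c(t)=(\bzeta+t\bxi)/\sqrt{1+t^2\|\bxi\|_\cE^2}$, thereby using the identity $\|\bzeta+t\bxi\|_\cE^2=1+t^2\|\bxi\|_\cE^2$ silently; it also never verifies that the image of $\cR_{\bzeta}$ lands on $\cE_{\A}^{r-1}$, nor that the normalization map actually solves $\argmin_{\y}\|\bzeta+\bxi-\y\|_\cE^2$ subject to $\y\in\cE_{\A}^{r-1}$, which is the property justifying the adjective \emph{metric} (in the main text this identification is simply asserted). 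You make the Pythagorean identity explicit, which simultaneously shows well-definedness ($\bzeta+\bxi\neq\zeros$ because $\A=\W^\top\W$ is positive definite under the paper's full-rank assumption), proves the second equality in \eqref{prop:retract_ellipse}, and gives the image condition; and your Cauchy--Schwarz argument in the $\cE$-inner product supplies the missing argmin characterization, with uniqueness of the maximizer. Your observation that tangency is precisely what kills the $\bzeta\,g'(0)$ term is also the right way to see why the hypothesis $\bxi\in T_{\bzeta}\cE_{\A}^{r-1}$ cannot be dropped. In short: same approach, but your version closes small gaps the paper leaves implicit, at the cost of a longer write-up.
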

We give the proof of the proposition in Appendix \ref{app:proof_retract_h}.

% Riemannian Gradient Descent
\noindent\textbf{Riemannian Gradient.}
We solve \eqref{h-manifold-subproblem} using RMU discussed in \textsection\ref{sec:tool:subsec:RMU}.
The Riemannian gradient 
$\grad \phi(\h_k) = \proj_{T_{\h_k}\cE} \nabla \overline{\phi}_{\cE}(\h_k)$, is
\[
\grad \phi(\h_k)
=
\bigg(\I_r-\dfrac{(\A\h_k)^{\otimes 2}}{\|\A\h_k\|_2^2}\bigg)
\Big(- \W^\top \m_{:j} \Big)
= 
\underbrace{
\dfrac{(\A\h_k)^{\otimes 2}}{\|\A \h_k\|_2^2} \W^\top\m_{:j}}_{\grad^+ \phi}
- 
\underbrace{
\W^\top
\m_{:j}
}_{\grad^- \phi},
\]
where we used both: \eqref{prop:orth_proj}, \eqref{formula:tensor}, the fact that $\h_k \in \cE_{\A}^{r-1}$, and the Euclidean gradient of $\h_k$ through Proposition\,\ref{prop:grad_f}.

Then, the RMU under metric retraction discussed in \textsection\ref{sec:tool:subsec:RMU} is 
\[
\h_{:j,k+1} = \dfrac{\z}{\| \z \|_{\cE_{\A}^{r-1}}},
~\text{where}~
\z = \h_{:j,k} \odot \grad^- \phi(\h_{:j,k})[\h_{:j,k}] \oslash  \grad^+ \phi(\h_{:j,k})[\h_{:j,k}]
.
\]

\subsection{The finite sum matrix-wise W-subproblem}\label{sec:W}
Now we discuss how to solve the W-subproblem in Algorithm~\ref{alg:BCD}.
Let $\langle \cdot,\cdot\rangle_\textrm{F} = \tr(\cdot^\top \cdot)$ be the Frobenius inner product, $\B_j \coloneqq \m_{:j} \h_{:j}^\top$ and $\A_j \coloneqq \h_{:j}\h_{:j}^\top$ (not the $\A,\B$ defined in \textsection\ref{sec:H}),
we rewrite the W-subproblem as
\begin{equation}\label{W-subproblem}
\displaystyle \argmax_{\W\geq \zeros}
\bigg\{
F(\W) =
\sum_{j=1}^n
\dfrac{\langle \B_j , \W \rangle_\textrm{F}}
{\sqrt{\langle \W, \W \A_j \rangle_\textrm{F}}}
\bigg\}.
\tag{W-subproblem}
\end{equation}
Note that if a column $\h_j$ is zero, the corresponding term is removed in the sum.

To solve \eqref{W-subproblem}, we first consider $n=1$.
We have a manifold which we name ``shell of a single twisted spectrahedron''.
Table~\ref{table:manifold} summarizes the results of RMU on such manifold.
See Appendix \ref{sec:W:subsect:singleW} for the derivation of manifold $\cM_j$, the tangent space $T_{\Z}\cM_j$, the projection and the retraction.

\begin{lemma}[RMU update on $\W$]\label{RMU_W_n1}
If $n=1$, we update $\W$  as 
\begin{equation}\label{updt:RMU_W_n1manifold}
\W_{k+1} =
\dfrac{\W_k \odot \grad^- f_j(\W_k)[\W_k] \oslash  \grad^+ f_j(\W_k)[\W_k]}{
\Big\| \W_k \odot \grad^- f_j(\W_k)[\W_k] \oslash  \grad^+ f_j(\W_k)[\W_k]\Big\|_{\A_j^{1/2}}
},
\end{equation}
where 
$
\grad f_j(\W) =
\underbrace{
\dfrac{(\Z\A_j)^{\otimes 2} }{\| \Z \A_j\|_\textrm{F}^2}  \B_j    
}_{\grad^+ f_j(\W)}
-
\underbrace{\B_j 
}_{\grad^- f_j(\W)}.
$
\end{lemma}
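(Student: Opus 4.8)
The plan is to specialize the generic RMU result (Proposition~\ref{prop:RMU_update}) to the single-term manifold $\cM_j$, exactly mirroring the treatment of the h-subproblem in \textsection\ref{sec:H}. The only genuinely new content is the derivation of the Riemannian gradient $\grad f_j$ together with its element-wise sign decomposition; once these are in hand, the update formula \eqref{updt:RMU_W_n1manifold} is a direct instantiation of Proposition~\ref{prop:RMU_update} with the metric retraction of Proposition~\ref{prop:retract_Mj}.

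First I would record the key simplification. On $\cM_j$ we have $\langle\W,\W\A_j\rangle_\textrm{F}=1$, so the objective collapses to the linear function $\langle\B_j,\W\rangle_\textrm{F}$. Since the W-subproblem is a maximization while RMU is a descent scheme, I work with the minimization of $-f_j$, whose smooth (linear) extension on $\cM_j$ has the constant Euclidean gradient $-\B_j$. This is the matrix analogue of reducing $\phi$ to $1-\langle\m_{:j},\W\h\rangle$ in the h-subproblem, and it lets me bypass differentiating the full ratio (although Proposition~\ref{prop:grad_f} yields the same thing once restricted to $\cM_j$).

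Next I would project this Euclidean gradient onto the tangent space $T_{\W_k}\cM_j$ via the projector of Proposition~\ref{prop:orth_projMatrixMan}, taking reference point $\Z=\W_k$:
\[
\grad f_j(\W_k)
=
\proj_{T_{\W_k}\cM_j}(-\B_j)
=
\Big(\I-\tfrac{(\W_k\A_j)^{\otimes2}}{\|\W_k\A_j\|_\textrm{F}^2}\Big)(-\B_j)
=
\underbrace{\tfrac{(\W_k\A_j)^{\otimes2}}{\|\W_k\A_j\|_\textrm{F}^2}\B_j}_{\grad^+ f_j}
-
\underbrace{\B_j}_{\grad^- f_j},
\]
where the tensor identity \eqref{formula:tensor} gives $(\W_k\A_j)^{\otimes2}\B_j=\langle\W_k\A_j,\B_j\rangle_\textrm{F}\,\W_k\A_j$. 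I would then check that this is a legitimate nonnegative sign split: since $\W_k\geq\zeros$, $\A_j=\h_{:j}\h_{:j}^\top\geq\zeros$ and $\B_j=\m_{:j}\h_{:j}^\top\geq\zeros$, the matrix $\W_k\A_j$ is nonnegative and the scalar $\langle\W_k\A_j,\B_j\rangle_\textrm{F}/\|\W_k\A_j\|_\textrm{F}^2$ is nonnegative, hence $\grad^+ f_j\geq\zeros$ and $\grad^- f_j\geq\zeros$.

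Finally I would invoke Proposition~\ref{prop:RMU_update} verbatim on $\cM_j$: with the element-wise stepsize $\balpha=\W_k\oslash\grad^+ f_j(\W_k)$ and the metric retraction $\cR_{\W_k}(\W)=\tfrac{\W_k+\W}{\|\W_k+\W\|_{\A_j^{1/2}}}$ of Proposition~\ref{prop:retract_Mj}, the additive $\W_k$ terms cancel inside the retraction exactly as in the proof of Proposition~\ref{prop:RMU_update}, leaving \eqref{updt:RMU_W_n1manifold} and guaranteeing $\W_{k+1}\geq\zeros$ with $\W_{k+1}\in\cM_j$. I expect the only delicate points to be bookkeeping the sign convention and the reference-point bracket in $\grad^{\pm} f_j(\W_k)[\W_k]$ (the bracket simply records that the tangent projection is evaluated at the current iterate $\W_k$), and confirming the nonnegativity of the sign split above. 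The feasibility-preservation itself is inherited directly from Proposition~\ref{prop:RMU_update} and needs no fresh argument, so the lemma is essentially a faithful specialization of the generic RMU step to the matrix manifold.
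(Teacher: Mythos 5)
Your proposal is correct and follows essentially the same route the paper intends: restrict the objective to $\cM_j$ (where the denominator equals $1$) so the relevant Euclidean gradient is $-\B_j$, project it onto $T_{\W_k}\cM_j$ with the projector of Proposition~\ref{prop:orth_projMatrixMan} together with the tensor identity \eqref{formula:tensor} to obtain the stated sign decomposition, and then invoke Proposition~\ref{prop:RMU_update} with the metric retraction of Proposition~\ref{prop:retract_Mj}. This mirrors exactly the paper's treatment of the h-subproblem and the derivation it delegates to Appendix~\ref{sec:W:subsect:singleW}, including your bookkeeping of the max-vs-min sign convention and the nonnegativity of the split.
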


For \eqref{W-subproblem} with $n \geq 2$, having multiple fractions makes the problem hard to solve. 
First, there is no a single manifold to guarantee $\langle \W, \W \A_j \rangle_\textrm{F}=1$ for all the denominators.
Second, the computation cost of manifold approach for $n \geq 2$ is high: it takes $\cO(2m^2)$ cost to compute the adjoint, with a projection step that has no closed-form solution.
Details are reported in \textsection\ref{sec:app:efficient_RgradW:n2} of the Appendix.

We do not solve \eqref{W-subproblem} with $n\geq2$ using RMU (which is designed for $n=1$), For illustration, we also considered a consensus-averaging approach between all the manifolds, which we call avgRMU. 
The motivation was that the manifolds can conflict with each other in RMU, so averaging provides a natural ``relaxation'' to fit RMU in the intersection of the manifolds.
We remark that later we found that experiments show that this approach performs worse than the Euclidean method, and we do not use it as our main solution.

Furthermore, we try two alternative methods.

\noindent\textbf{EPG} for \eqref{W-subproblem}
$\W_{k+1} = [\W_k-\eta \nabla F(\W_k)]_+$ with a stepsize $\eta\geq 0$.
In the update, we compute the gradient by quotient rule, $\nabla \langle \W, \W \A_j   \rangle_\textrm{F} = \W \A_j + \W \A_j^\top$, $\A_j^\top = \A_j$ and  Proposition~\ref{prop:grad_f} as in the following proposition.
\begin{proposition}\label{prop:gradient_W}
The Euclidean gradient for $F(\W)$ in \eqref{W-subproblem} is
\begin{equation}\label{gradient_W}
\nabla F(\W) 
~=~
\sum_{j=1}^n
\dfrac{\B_j  }{\langle \W,  \W \A_j \rangle^{1/2}_\textrm{F}}
-
\dfrac{ \langle \B_j , \W \rangle_\textrm{F}   \W \A_j}{\langle \W,  \W \A_j  \rangle^{3/2}_\textrm{F}}
.
\end{equation}
\end{proposition}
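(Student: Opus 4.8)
The plan is to exploit the finite-sum structure together with the linearity of the gradient operator, reducing the task to differentiating a single summand $f_j(\W) \coloneqq \langle \B_j, \W\rangle_\textrm{F} / \langle \W, \W\A_j\rangle_\textrm{F}^{1/2}$ and then summing over $j$. Each summand is a ratio of a numerator that is linear in $\W$ over the square root of a quadratic form in $\W$, so the quotient rule is the natural instrument. Throughout I would assume $\langle \W, \W\A_j\rangle_\textrm{F} > 0$ (equivalently $\W\h_{:j} \neq \zeros$), which is exactly the nonvanishing-denominator hypothesis $\D\x + \e \neq \zeros$ demanded by Proposition~\ref{prop:grad_f} and which secures differentiability at $\W$.

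First I would record the two elementary gradients. The numerator $N_j(\W) \coloneqq \langle \B_j, \W\rangle_\textrm{F}$ is linear, hence $\nabla N_j = \B_j$. For the quadratic form $g_j(\W) \coloneqq \langle \W, \W\A_j\rangle_\textrm{F} = \tr(\W^\top \W \A_j)$, a differential computation yields $\nabla g_j = \W\A_j + \W\A_j^\top$; the key simplification is the symmetry $\A_j = \h_{:j}\h_{:j}^\top = \A_j^\top$, which collapses this to $\nabla g_j = 2\W\A_j$. Composing with the scalar square root through the chain rule gives $\nabla(g_j^{1/2}) = \tfrac{1}{2} g_j^{-1/2}\,\nabla g_j = \W\A_j / \langle \W, \W\A_j\rangle_\textrm{F}^{1/2}$.

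Then I would assemble the summand gradient by the quotient rule $\nabla f_j = (D\,\nabla N_j - N_j\,\nabla D)/D^2$ with $D = g_j^{1/2}$, substitute the two pieces above, and tidy the powers of $\langle \W, \W\A_j\rangle_\textrm{F}$: the first term becomes $\B_j / \langle \W, \W\A_j\rangle_\textrm{F}^{1/2}$ and the second becomes $\langle \B_j, \W\rangle_\textrm{F}\,\W\A_j / \langle \W, \W\A_j\rangle_\textrm{F}^{3/2}$. Summing over $j$ produces exactly \eqref{gradient_W}.

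An alternative, matching the hint in the statement, is to vectorize $\x = \mathrm{vec}(\W)$ and invoke Proposition~\ref{prop:grad_f} directly: set $\A = \I$, $\b = \e = \zeros$, $\c = \mathrm{vec}(\B_j)$, and let $\D$ be the Kronecker operator realizing $\D\x = \mathrm{vec}(\W\A_j^{1/2})$, so that $\|\D\x\|_2^2 = \langle \W\A_j^{1/2}, \W\A_j^{1/2}\rangle_\textrm{F} = \langle \W, \W\A_j\rangle_\textrm{F}$. I expect the main obstacle to live on this route rather than in the calculus itself: one must correctly identify the symmetric square root $\A_j^{1/2} = \h_{:j}\h_{:j}^\top / \|\h_{:j}\|_2$ of the rank-one matrix $\A_j$, verify that $\D^\top\D = \A_j \otimes \I$ so that $\D^\top\D\,\x = \mathrm{vec}(\W\A_j)$, and then devectorize the resulting expression back into matrix form. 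For this reason I would favor the direct matrix-calculus derivation, where the only point demanding genuine care is the symmetry of $\A_j$ that turns $\W(\A_j + \A_j^\top)$ into $2\W\A_j$.
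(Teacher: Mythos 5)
Your proof is correct and follows essentially the same route the paper takes: the paper's own (very brief) justification is precisely the quotient rule combined with $\nabla\langle\W,\W\A_j\rangle_\textrm{F}=\W\A_j+\W\A_j^\top$ and the symmetry $\A_j^\top=\A_j$, which is exactly your primary matrix-calculus derivation, with your summand-by-summand computation simply filling in the details the paper leaves implicit. The vectorization alternative you sketch (and sensibly set aside) is not needed; your check that $\langle\W,\W\A_j\rangle_\textrm{F}>0$, i.e.\ $\W\h_{:j}\neq\zeros$, is the right differentiability caveat and matches the paper's convention of dropping terms with $\h_{:j}=\zeros$.
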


\noindent\textbf{Fractional Programming (FP).}\label{par:fracprog}
We solve \eqref{W-subproblem} using FP by the Dinkelbach transform~\cite{dinkelbach1967nonlinear} and Jagannathan's iteration~\cite{jagannathan1966some} with the EPG.

\section{Experiment}\label{sec:Exp}
In this section we first perform testing on verifying the effectiveness of RMU on the subproblems, then we test the performance of Chordal-NMF on synthetic and real datasets.

\subsection{RMU on h-subproblem and W-subproblem}

\noindent\textbf{Test on updating H.}
We compare RMU and methods mentioned in \textsection\ref{sec:tool:subsec:review} on synthetic datasets randomly generated under normal distribution $\cN(\zeros_r, \I_r)$ with negative entries replaced by zero.
We performed Monte Carlo trials on datasets with different settings of $(m,r)$.
Fig.~\ref{fig:h_speed} shows a typical convergence of the objective function. 
RMU is among the fastest methods with strict feasibility. 
RADMM (Riemannian ADMM in \textsection\ref{sec:tool:subsec:review}) is also fast, while EPG and RALM (Riemannian Augmented Lagrangian multiplier method) has the worst convergence.
In the test, we run at most $10^{5}$ iterations, or stop the algorithm when the $\|\h_{j,k}-\h_{j,k-1}\| \leq 10^{-12}$.

\begin{figure}[!ht]
\begin{minipage}{0.51\textwidth}
\includegraphics[width=\textwidth]{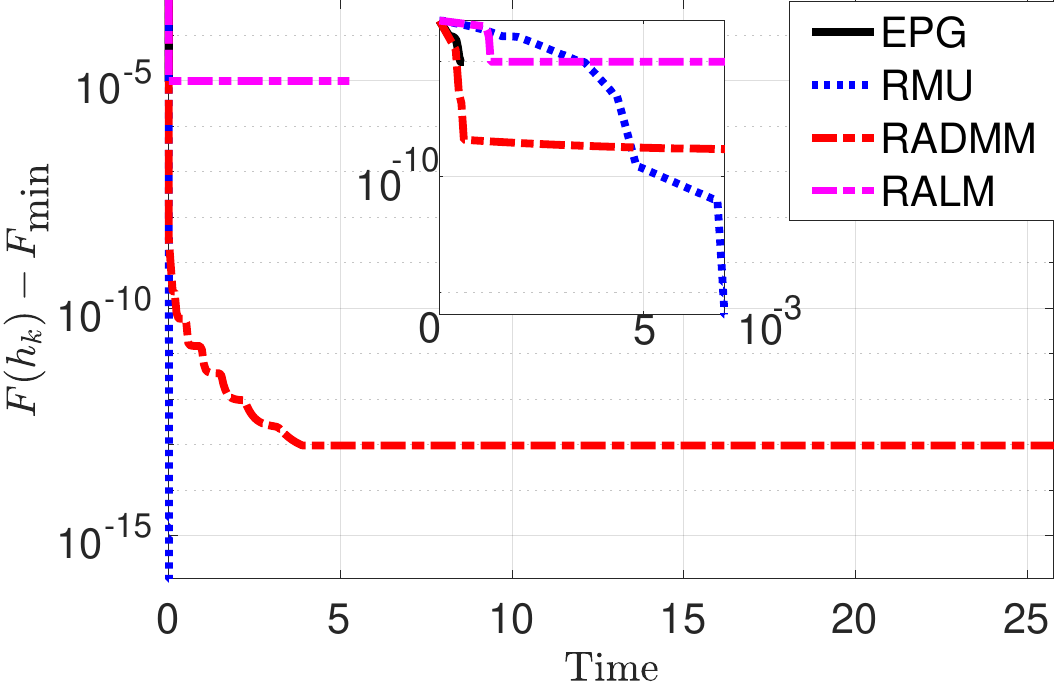}
\end{minipage}
\begin{minipage}{0.4\textwidth}
\begin{tabular}{l|l}
\hline
$(m,r)=(5,3)$ & Rankings \\ \hline 
EPG & (47, 20, 26, 7, 0)
\\
RMU & (53, 47, 0, 0, 0)
\\
RADMM & (0, 7, 16, 0, 77)
\\
RALM & (0, 26, 31, 14, 29)
\\ \hline 
$(m,r)=(100,10)$ & Rankings \\ \hline 
EPG & (5, 87, 8, 0, 0)
\\
RMU & (95, 5, 0, 0, 0)
\\
RADMM & (0, 5, 0, 0, 95)
\\
RALM & (0, 3, 61, 5, 31)
\\ \hline
\end{tabular}
\end{minipage}
\caption{
Left: A typical convergence of the four methods: EPG, RMU, RADMM and RALM. 
In the y-axis, $F_{\min}$ refers to the smallest achieved $F$ value across the methods.
The smaller subplot
is zooming the first $7\times 10^{-3}$ seconds.
In this test, we have $(m,r)=(5,3)$ and RMU is the fastest methods with strict feasibility. 
Right: the results on different settings of $(m,r)$.
A ranking vector $(x,y,z,...)$ means the method has $x$ times out of 100 runs having the best performance, $y$ times  out of 100 runs having the 2nd performance, and so on.
There are four methods, and the fifth number indicate how many times the method produces an infeasible solution ( having negative values) in the end.
}
\label{fig:h_speed}
\end{figure}

\noindent\textbf{Test on updating W.}
As RMU for the whole sum $F(\W)$ in \eqref{W-subproblem} is computationally infeasible, we consider averaging the RMU for each manifold.
We compare the performance of three methods: EPG, avgRMU and FP. 
We run experiments on two synthetic datasets randomly generated under zero-mean unit variance Gaussian distribution with negative entries replaced by zero. 
We performed 100 Monte Carlo runs on two datasets sizing $(m,n,r) = (10,25,4)$ and $(m,n,r) = (100,100,4)$. 
Fig.~\ref{fig:W_speed} shows the typical convergence of three methods.
Generally EPG is the fastest method.
For $(m,n,r)=(10,25,4)$, 99 times out of 100 MC runs EPG is the fastest and 96 times out of 100 runs FP is the 2nd fastest.
For $(m,n,r)=(100,100,4)$, 93 times out of 100 MC runs EPG is the fastest and 93 times out of 100 runs FP is the 2nd fastest.
avgRMU is the slowest in all the 100 runs for both cases.

\begin{figure}[!ht]
\centering
\includegraphics[width=0.495\textwidth]{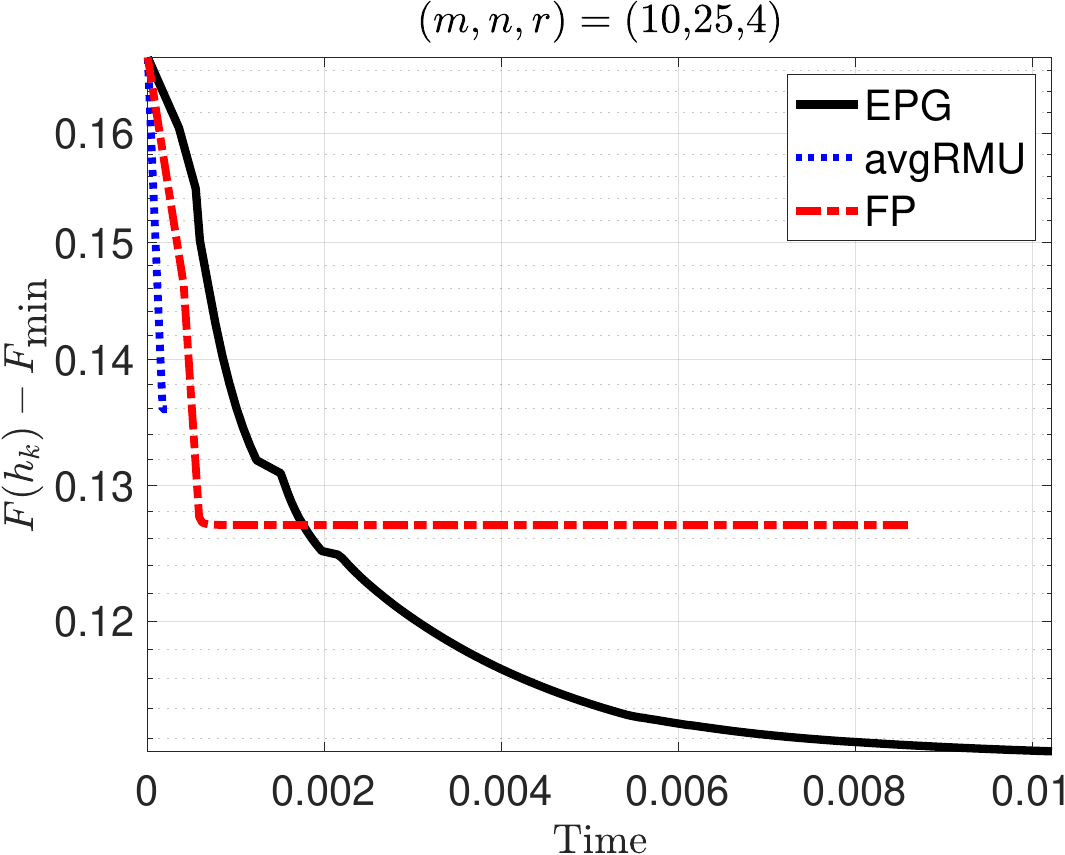}
\includegraphics[width=0.495\textwidth]{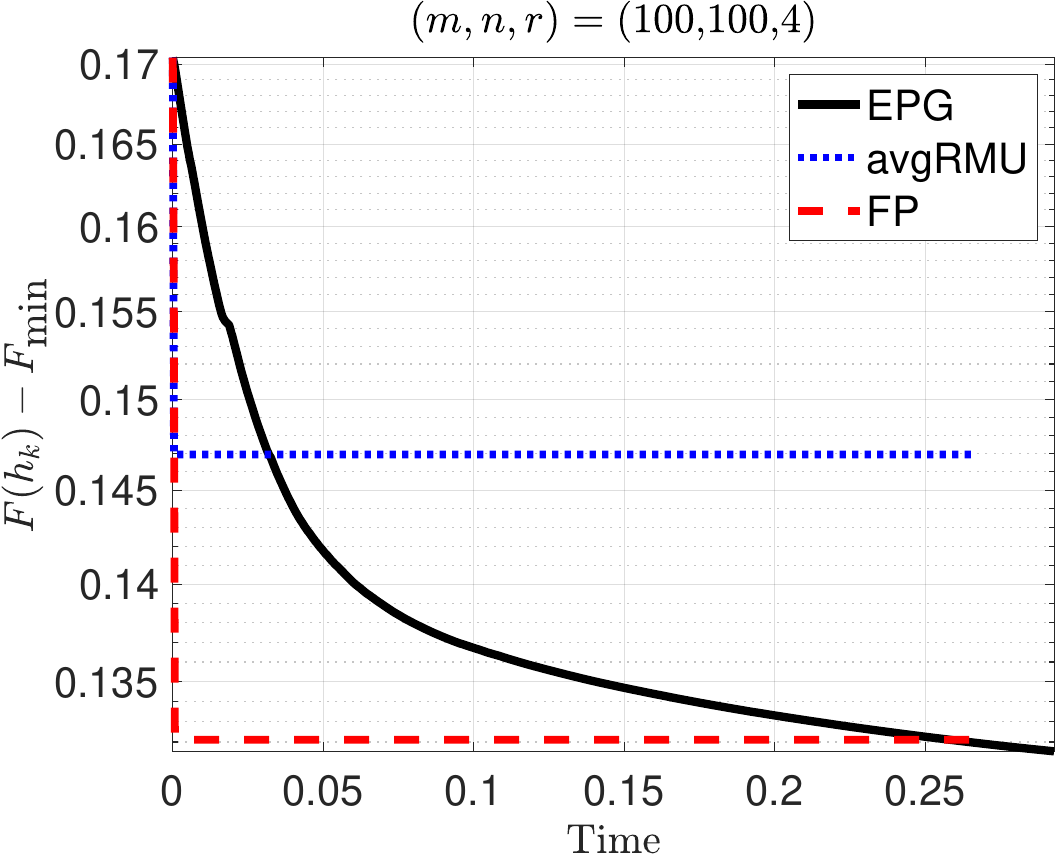}
\caption{Typical convergence of three methods: EPG, avgRMU and FP.
}
\label{fig:W_speed}
\end{figure}

\subsection{Testing Chordal-NMF on synthetic dataset}

\noindent\textbf{How Chordal-NMF is applied.}
Given a data matrix $\M$, we remove all zero columns, then we normalize each column by its $\ell_2-$norm.
After that we run Chordal-NMF on $\M$ and get the decomposition $\W\H$.
According to the preliminary test results, 
for the h-subproblem, we perform RMU (\textsection\ref{sec:H}), and 
for the W-subproblem, we perform EPG (\textsection\ref{sec:W}).
\\

\noindent\textbf{Benchmark.}
We compare Chordal-NMF with the classical F-norm NMF (FroNMF) based on the algorithm HALS~\cite[Ch8.3.3]{gillis2020nonnegative}.
We set the rank of Chordal-NMF equal to the one in FroNMF.
In the experiments, all the methods start with the same initialization, where elements in $\W_0, \H_0$ are generated under uniform distribution $\cU[0,1]$.
The experiments were conducted in MATLAB 2023a\footnote{The MALTAB code is available at \url{https://github.com/flaespo/Chordal_NMF}.} 
in a machine with OS Windows 11 Pro on a Intel Core 12 gen. CPU 2.20GHz and 16GB RAM.
\\

\noindent\textbf{Synthetic dataset.} 
We use a dataset $\M^{\epsilon,\delta} = \W_{\text{true}} \H_{\text{true}}^{\epsilon,\delta}$ with $\epsilon >0, \delta >0$ as
\[
\W_{\text{true}}
=
\begin{bmatrix}
 0.8 & 0.1 & 0.1
 \\
 0.1 & 0.8 & 0.1
 \\
 0.1 & 0.1 & 0.8
\end{bmatrix},
\H_{\text{true}}^{\epsilon,\delta}
=
\begin{bmatrix}
1-\epsilon & \delta( 1-\epsilon) & \epsilon   & \delta\epsilon   & \epsilon   & \delta\epsilon
\\
\epsilon   & \delta\epsilon   & 1-\epsilon & \delta(1-\epsilon) & \epsilon   & \delta\epsilon 
\\
\epsilon   & \delta\epsilon   & \epsilon   & \delta\epsilon   & 1-\epsilon & \delta(1-\epsilon) 
\end{bmatrix}.
\]
The matrix $\W_{\text{true}}$ represents a cone in $\IR^3$ (see Fig.\,\ref{fig:grid_synthetic}).
The matrix $\H_{\text{true}}^{\epsilon,\delta}$ represents how we generate the six columns of $\M^{\epsilon,\delta}$ by conic combination of columns of $\W_{\text{true}}$ under a small perturbation $\epsilon$ and an attenuation $\delta$.
The perturbation $\epsilon$ represents how much the columns of $\M^{\epsilon,\delta}$ deviate from the columns of $\W_{\text{true}}$, while the attenuation $\delta$ represents how much the columns $\M_{:2}, \M_{:4}, \M_{:6}$ in $\M^{\epsilon,\delta}$ have their norm scaled downward.
For $\delta$ getting smaller, it is getting harder for FroNMF to recover $\H_{\text{true}}$, while it is less a problem for Chordal-NMF since the angle between the data columns are invariant to the attenuation $\delta$.
See Fig.\,\ref{fig:grid_synthetic} for an illustration.

We construct $\M^{\epsilon,\delta}$ across different values of $(\epsilon,\delta)$, and run Chordal-NMF and FroNMF on each instance of $\M^{\epsilon,\delta}$ generated.
Then we extract the matrix $\H$ produced by the last iteration of the each method, and calculate the relative error between $\H$ and $\H_{\text{true}}^{\epsilon,\delta}$ as $\|\H-\H_{\text{true}}^{\epsilon,\delta}\|_F/\|\H_{\text{true}}^{\epsilon,\delta}\|_F$.
Fig.~\ref{fig:grid_synthetic} shows the heat-map of the  results across different values of $(\epsilon,\delta)$, showing that Chordal-NMF on average has a better recovery of $\H$ than FroNMF, especially for the case when $\delta$ is small.
We also compute the relative error on the matrix $\W$, where we found that the result on Chordal-NMF gives exactly the same performance as the one obtained from the FroNMF\footnote{Both Chordal-NMF and FroNMF gives a $4.02\%$ relative error on $\W$.}.
\begin{figure}[!h]
\centering
\includegraphics[width=.32\textwidth]{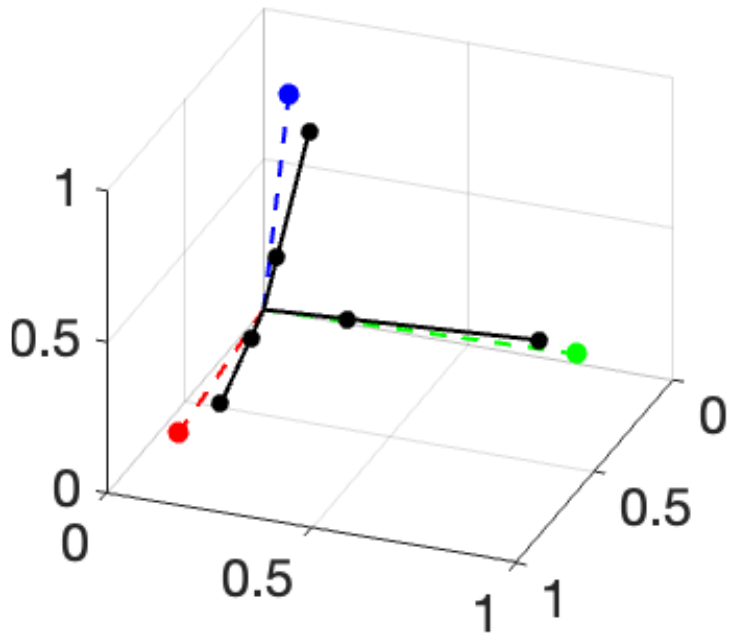}
~~~
\includegraphics[width=.6\textwidth]{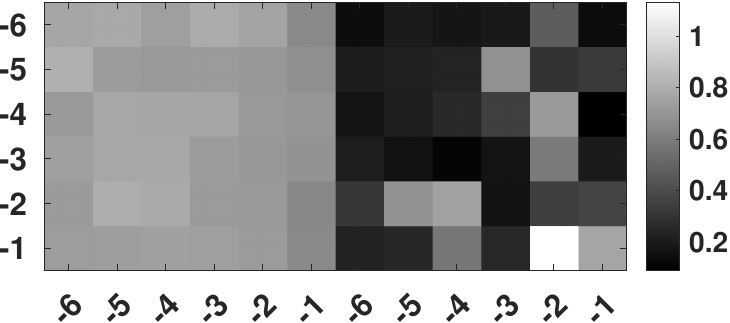}
\caption{
\textbf{Left}: the plot of $\W_{\text{true}}$ (the red, blue, green ray) and $\M(0.1,0.3)$ (the black rays).
\textbf{Right}:
The relative error $\|\H-\H_{\text{true}}^{\epsilon,\delta}\|_F/\|\H_{\text{true}}^{\epsilon,\delta}\|_F$ for the two methods, where left gird (the first six columns) is the result from FroNMF, and the right grid (the last six columns) is the result from Chordal-NMF.
In the grid, the x-axis is the value of $\delta$ (in log-scale) and the y-axis is the value of $\epsilon$ (in log-scale)
.
}
\label{fig:grid_synthetic}
\end{figure}

\subsection{Testing Chordal-NMF on real-world data}
NMF finds applications in real-world data analysis~\cite{gillis2020nonnegative}. 
An example is Earth Observation (EO) in remote sensing. 
EO typically involves the utilization of hyperspectral images, which are stored as matrices and can be effectively analyzed through NMF for unmixing purposes~\cite{feng2022hyperspectral}. 
A conventional approach in this context involves addressing the standard NMF problem by minimizing the point-to-point F-norm~\cite{ang2019algorithms}. 
However, in this section, we introduce a comparative analysis between the performance of FroNMF and the Chordal-NMF proposed in this work.
\\

\noindent\textbf{The EO Dataset.}
We chose a cloudy multispectral image due to its relevance in highlighting the advantages of employing Chordal-NMF compared to standard FroNMF for analyzing multispectral images under various cloudiness conditions. 
Our goal is to provide an illustration of how Chordal-NMF can better manage the presence of cloud conditions, thereby presenting itself as a useful alternative for conducting image analysis for EO. 
Here the matrix $\M$ is a cloudy multispectral image in an area of Apulia region in Italy, from the Copernicus space ecosystem\footnote{Pixel size rows:150, cols:290 and bands:12. 
Data from \url{https://dataspace.copernicus.eu/}}.
We use a reference image obtained in cloudless conditions as the ground truth\footnote{The cloudy image and the cloudless image are obtained in the same condition (except the cloudiness) 
with a temporal difference of 5 days.
}. \\

\noindent\textbf{The reconstruction.}
We run FroNMF and Chordal-NMF on the cloudy data $\M$ with the same initialization $\W_0,\H_0$ obtained from random uniform distribution $\cU[0,1]$.
We run Chordal-NMF and FroNMF with 5000 iterations.
For FroNMF, we run the implementation from \cite{gillis2020nonnegative}.
For Chordal-NMF, we run BCD with 25 iterations for each column on $\h_{:,j}$ to update $\H$, and 1 iteration of matrix-wise EPG update for $\W$.
The values of the Chordal function for the methods are: 0.1906100 (Initial Value), 0.0014398 (Chordal-NMF), 0.001279 (FroNMF).
It took ChordalNMF 2775 seconds (about 0.5 second per iteration), while it took FroNMF 2 seconds.
Fig.~\ref{fig:real_RGB} shows an RGB representation of the ground truth (cloudless reference image),
the cloudy data $\M$, and the reconstruction obtained from Chordal-NMF and FroNMF on $\M$.
\\
 
\begin{figure}[!h]
\centering
\includegraphics[width=1\textwidth]{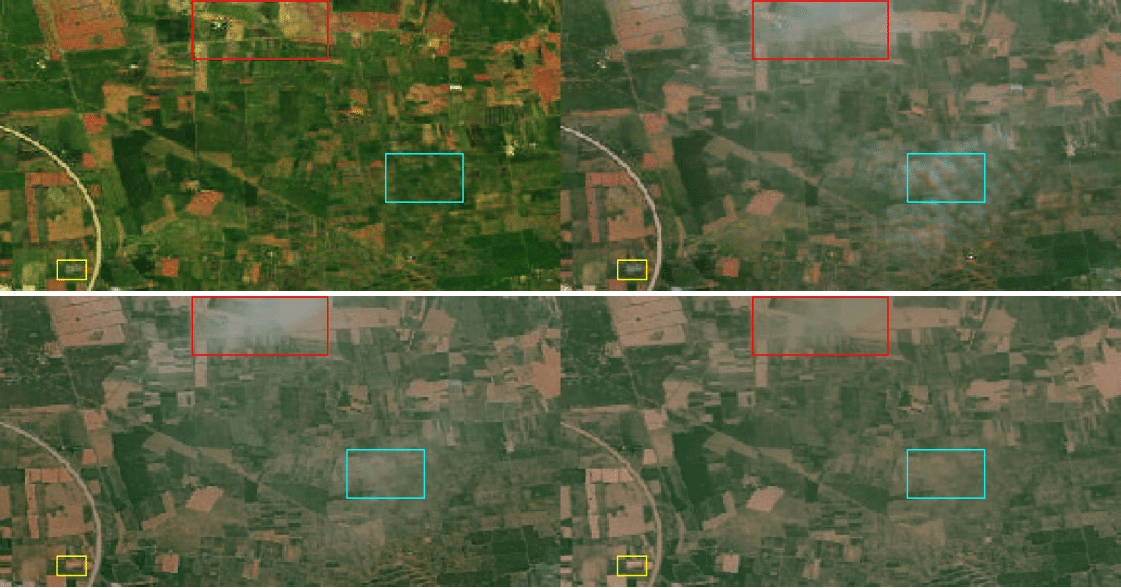}
\caption{
The RGB image of a scene.
Top left: the cloudless reference image.
Top right: the cloudy data image.
Bottom left: the reconstruction obtained by FroNMF.
Bottom right: the reconstruction obtained by Chordal-NMF.
In the images, the three color-boxes are selected areas under different cloundiness in which we perform further quantitative analysis.
}
\label{fig:real_RGB}
\end{figure}

\noindent\textbf{Further analysis on the three selected areas.}
In Fig.~\ref{fig:real_RGB}, three boxes were chosen to represent three distinct cloud conditions: the red box is a cloudy area, the cyan box is a less-cloudy areas, and the yellow box is a cloudless area.
We compute the spectral signatures of the pixels in these areas, and their mean behavior is plotted in Fig.~\ref{fig:real_spectra}.
We numerically compared these spectral profile vectors by two criteria:
\begin{enumerate}
\item \textbf{SID-SAM between the spectral profile vectors.}
Given two spectral profile vectors $\t = [t_1,...,t_N]$ and $\r = [r_1,...,r_N]$, 
we compute $\p = \t/\|\t\|_2$, $\q = \r /\|\r\|_2$, and then we compute the SID-SAM~\cite{du2004new} of the spectral information divergence (SID) and the spectral angle mapper (SAM), defined as $\text{SID-SAM} =  \text{SID}\times \tan{\alpha}$,
where
\[
     \text{SID}  
     = \sum\limits_{i=1}^N{p_i\log{\frac{p_i}{q_i}}}
     + \sum\limits_{i=1}^N{q_i\log{\frac{q_i}{p_i}}},
\quad
\alpha 
= 
\cos^{-1}
\left(
\sum\limits_{i=1}^N{t_ir_i} \bigg/ \sqrt{
\bigg( \sum\limits_{i=1}^N{t_i^2} \bigg)
\bigg( \sum\limits_{i=1}^N{r_i^2} \bigg)
}
\right)
.
\]
 \item \textbf{$\ell_2$-norm of the difference between the spectral profile vectors.}
 That is, $\|\t-\t_{\text{ChordalNMF}}\|_2$ and $\|\t-\t_{\text{FroNMF}}\|_2$.
 
\end{enumerate}
Results are reported in Table~\ref{tab:val_box} for the three areas, in which Chordal-NMF achieves a better performance.

\begin{figure}[!ht]
\centering
\includegraphics[width=0.99\textwidth]{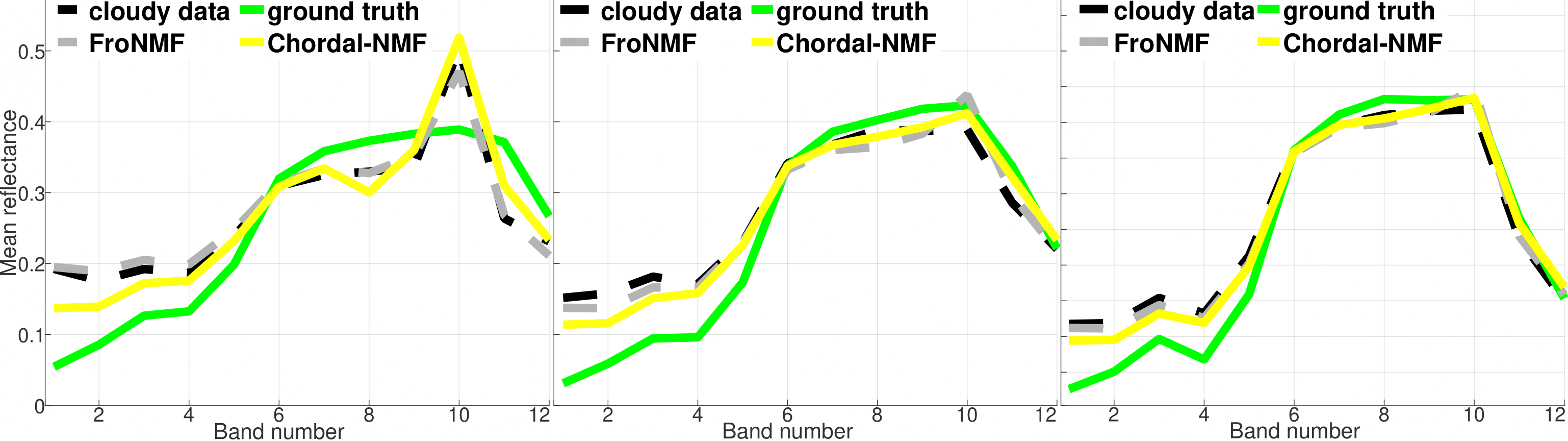}
\caption{
From left to right: the spectral signatures of the pixels in the 
red/cyan/yellow box in Fig.\ref{fig:real_RGB}.
In all the three cases, the spectral profile of obtained by Chordal-NMF is on average closer to the cloudless reference spectral profile.
}
\label{fig:real_spectra}
\end{figure}
\begin{table}[!ht]
\caption{The numerics of the pixels in the three boxes.}
\centering
\begin{tabular}{l||cc||cc||cc}
& \multicolumn{2}{c||}{Red box}  & \multicolumn{2}{c||}{Cyan box}  & \multicolumn{2}{c}{Yellow  box}
\\ \hline
Compared with cloudless &  SID-SAM & $\ell_2$ & SID-SAM & $\ell_2$  & SID-SAM & $\ell_2$     
\\\hline
Cloudy data &  2.2559 & 0.2655   &    0.0633  & 0.2154     & 0.0807 & 0.1594  
\\
FroNMF      &  2.0214  & 0.2650  &    0.5518 & 0.1901     & 0.3574 & 0.1458  
\\
Chordal-NMF  & 0.2090 &  0.2116 & 0.1486 & 0.1487        & 0.1161 &  0.1161         
\end{tabular}
\label{tab:val_box}
\end{table}

On the cloudy image recovery, we see that Chordal-NMF is always better than FroNMF regardless of the cloudiness of the image by achieving a lower SID-SAM value and a lower $\ell_2$-norm value.
For example, in the red box, Chordal-NMF seems to have a better recovery of the region under the cloud.
\\

\noindent\textbf{Performance on Samson dataset}
We test our approach also on a benchmark dataset for EO applications: Samson dataset~\cite{gillis2020nonnegative}. 
In this image, there are $95\times 95$ pixels, each pixel is recorded at $156$ channels. 
This dataset is not challenging as many analyses have been already carried out~\cite{ang2019algorithms}. 
Even if it is well known from the literature that there are three targets in the image, we want to highlight how Chordal-NMF is better at extracting the rock/soil component.
Fig.~\ref{fig:real_samson} shows the abundance map from the FroNMF and Chordal-NMF. 
We can see that the Chordal-NMF is able to recover rock under shadow water regions near the coast better than FroNMF.
\begin{figure}[!ht]
\centering
\includegraphics[width=.27\textwidth]{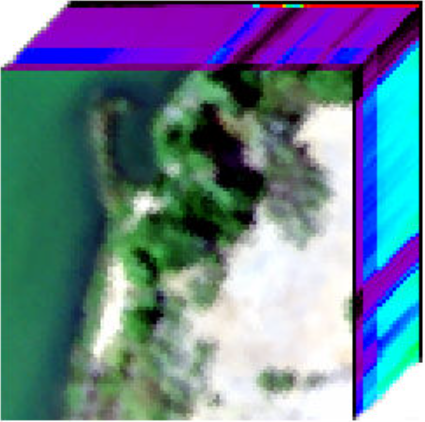}
~~
\includegraphics[width=.6\textwidth]{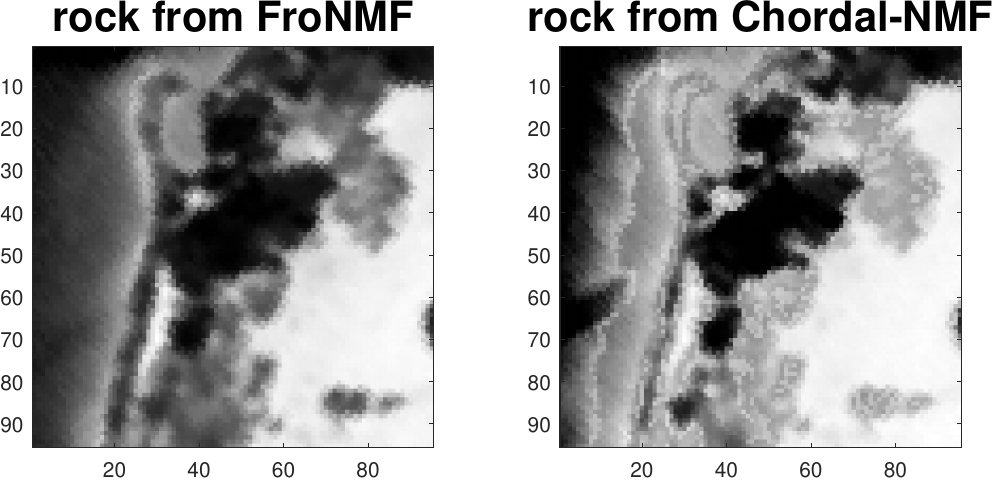}
\caption{The Samson dataset and the rock abundances map of the decompositions. 
From left to right: the data, the abundance maps obtained form FroNMF and  Chordal-NMF.
}
\label{fig:real_samson}
\end{figure}

\section{Conclusion}\label{sec:conc}
In this work, we introduced a NMF model called Chordal-NMF, which is different from the classical NMF using a point-to-point Euclidean distance, Chordal-NMF uses a ray-to-ray distance. 
Based on the geometric interpretation that NMF describes a cone, we argued that chordal distance, which measures the angle between two vector in the nonnegative orthant, is more suitable than the Euclidean distance for NMF.
Under a BCD framework, we developed a new projection-free algorithm, called Riemannian Multiplicative Update to solve the Chordal-NMF, where Riemannian optimization technique is used.
We showcase the effectiveness of the Chordal-NMF on the synthetic dataset and real-world multispectral images.

%========================================================
\appendix

\section{The proof of Proposition 1}
\begin{proof}
Let $h(\x) = (g\circ f)(\x) = \big(f(\x) \big)^2$ where  $g : \IR \to \IR : x \mapsto x^2$ is  convex, increasing (on $\IR_+$) and differentiable, then by chain rule 
$\partial (g\circ f)(\x) =
g'(f(\x)) \partial f(\x)$, which gives
\[
\nabla h(\x) 
~=~
2f(\x) \nabla f(\x) 
~=~
\dfrac{\langle \A \x + \b, \c \rangle}{2\| \D \x + \e \|_2} \nabla f(\x).
\tag{chain-rule}
\]
Now by the definition $h(\x) = \big(f(\x)\big)^2$, so
\[
\nabla h(\x) = \nabla \Bigg( \dfrac{\Big(\langle \A \x + \b, \c \rangle\Big)^2}{\| \D \x + \e \|_2^2}
\Bigg).
\tag{grad-h}
\]
Equate (chain-rule) and (grad-h) gives 
$
\nabla f(\x) 
= 
\dfrac{\| \D \x + \e \|_2}{\langle \A \x + \b, \c \rangle}
\nabla \bigg( \dfrac{\big(\langle \A \x + \b, \c \rangle\big)^2}{2\| \D \x + \e \|_2^2}
\bigg)
$.
By assumption $\D \x + \e \neq \zeros$, we use quotient rule $\nabla \dfrac{f}{g} = \dfrac{g \nabla f - f \nabla g}{g^2}$ to arrive at
\[
\begin{array}{rcl}
\nabla f(\x) 
&=&\displaystyle 
\dfrac{\| \D \x + \e \|_2}{\langle \A \x + \b, \c \rangle}
\dfrac{
\| \D \x + \e \|_2^2
\nabla \big(\langle \A \x + \b, \c \rangle\big)^2
-
\big(\langle \A \x + \b, \c \rangle\big)^2
\nabla \| \D \x + \e \|_2^2
}
{2\| \D \x + \e \|_2^4
}
\vspace{0.5mm}
\\
&=&
\displaystyle 
\dfrac{
\| \D \x + \e \|_2^2
\nabla \big(\langle \A \x + \b, \c \rangle\big)^2
-
\big(\langle \A \x + \b, \c \rangle\big)^2
\nabla \| \D \x + \e \|_2^2
}
{2\langle \A \x + \b, \c \rangle \| \D \x + \e \|_2^3 
}
\vspace{0.5mm}
\\
&=&
\displaystyle 
\dfrac{
\| \D \x + \e \|_2^2
2\langle \A \x + \b, \c \rangle \A^\top \c
-
\big(\langle \A \x + \b, \c \rangle\big)^2
2 \D^\top (\D \x + \e)
}
{2\langle \A \x + \b, \c \rangle \| \D \x + \e \|_2^3 
}
\vspace{0.5mm}
\\
&=&
\displaystyle 
\dfrac{
\| \D \x + \e \|_2^2 \A^\top \c
-
\langle \A \x + \b, \c \rangle
\D^\top (\D \x + \e)
}
{ \| \D \x + \e \|_2^3 
}.
\end{array}
\]
\end{proof}

\section{The proof of Proposition \ref{prop:restract_h}}
\label{app:proof_retract_h}
In this appendix, we give the proof of Proposition \ref{prop:restract_h} for the metric retraction over the ellipsoid.
\begin{proof}
To show that \eqref{prop:retract_ellipse} is a retraction, we use the definition of retraction in \textsection\ref{sec:tools:subsec:background}.
Consider a continuous curve $c:\IR\rightarrow\cE_{\A}^{r-1}$, that is smooth for all variables $(\bzeta, \bxi) \in \IRr\times \IRr$, defined as 
$c(t) = \cR_{\bzeta}(t\bxi) = (\bzeta + t\bxi)/ \sqrt{1 + t^2\| \bxi\|_\cE^2}$.
Then $c(0)=\bzeta$ and
\[
c'(0) 
~\coloneqq~
\dfrac{d c(t)}{d t}\bigg|_{t=0}
~=~
\dfrac{\bxi-t\bzeta\|\bxi\|_\cE^2}{(1+t^2\|\bxi\|_\cE^2)\sqrt{1+t^2\|\bxi\|_\cE^2}}\bigg|_{t=0} 
~=~
\bxi.
\]
So $\cR$ is a retraction for $\cE_{\A}^{r-1}$. 
\end{proof}

\section{Derivation of content in Table~\ref{table:manifold}}\label{sec:W:subsect:singleW}
Consider \eqref{W-subproblem} with $n=1$. 
Following the discussion in \textsection\ref{sec:H}, we get grid of the denominator $\langle \W, \W\A_1 \rangle_\textrm{F} $ in by introducing a constrained problem 
$ \argmax_{\W\geq \zeros}
\langle \B_j , \W \rangle_\textrm{F} \,\st\, \langle \W, \W \A_1 \rangle_\textrm{F} = 1
$.
We note that $\A_1$ is a rank-1 symmetric positive semi-definite (PSD) matrix with two eigenvalues: a single positive eigenvalue and $0$ with multiplicity $r-1$.
Moreover, $\A_1$ has its square-root $\A^{1/2}_1$, so 
$
\langle\W^\top\W,\A_1\rangle_\textrm{F}=
\langle\W^\top\W,\A^{1/2}_1\A^{1/2}_1\rangle_\textrm{F}=
\langle\W\A^{1/2}_1,\W\A^{1/2}_1\rangle_\textrm{F}=
\langle\W, \W\rangle_{\A^{1/2}_1}
$ allows us to define the manifold $\cM_1$ as
\begin{equation}\label{eq:matrix_man}
\cM_1
=
\Big\{
\W\in\IRmr ~|~
\langle\W, \W\rangle_{\A^{1/2}_1} - 1 = 0
\Big\}.
\tag{Shell of twisted spectrahedron}
\end{equation}
The term spectrahedron~\cite{ramana1995some} refers to the eigen-spectrum of a matrix behaves like a polyhedron, while the word ``twisted'' refers to the linear transformation $\A_1$.
\\

% Def of Tenagent of Ellip
\noindent\textbf{Tangent space.}
On a single $\cM_j$, let $h(\W) = \langle \W, \W\A_j \rangle_\textrm{F} - 1$ define $\cM_j$, 
its differential is $\textrm{D}h(\Z)[\Z] = 2 \langle \W\A_j, \Z \rangle_\textrm{F}$.
The tangent space of $\cM_j$ at a reference point $\Z \in \cM_j$, denoted as $T_{\Z}\cM_j$, is 
\begin{equation}\label{def:TangentMatrixMan}
T_{\Z}\cM_j 
\coloneqq 
\Big\{ \W \in \IRmr \,\big|\, \langle \Z \A_j, \W \rangle_\textrm{F} = 0 \Big\}
\,=\, \textrm{KerD}h(\W).
\end{equation}

\noindent\textbf{Projection onto tangent space, and Retraction.}
We define the adjoint operator of $\textrm{D}h(\W)[\Z]$ as $\textrm{D}h(\W)^*[{\alpha}]=2{\alpha}\W\A_j$, where $\alpha$ takes the value $\overline{\alpha}=\dfrac{1}{2}\dfrac{\langle\W,\Z\A_j\rangle_\textrm{F}}{\|\Z\A_j\|_\textrm{F}^2} $ obtained by solving the  least squares problem
$\displaystyle \overline{\alpha} = \argmin_{\alpha \in \IR}
\big\| \W-\textrm{D}h(\Z)^*[\alpha] \big\|_\textrm{F}^2$.
Now by $\textrm{D}h(\W)^*[{\alpha}]$, $\overline{\alpha}$, the  definition of projector onto tangent space~\cite[Def3.60]{boumal2023introduction}, and the property of orthogonal projector~\cite[Eq.7.74]{boumal2023introduction}, we have the projector $
\proj_{T_{\Z}\cM_j}(\W) = \W-\textrm{D}h(\Z)^*[\overline{\alpha}]
$ as shown in the following proposition.
\\

\begin{proposition}\label{prop:orth_projMatrixMan}
The map $\proj_{T_{\Z}\cM_j}:\IRmr \rightarrow T_{\Z}\cM_j$ that brings $\W$ 
onto the tangent space at the reference point $\Z$ defined as
\begin{equation}
\proj_{T_{\Z}\cM_j}(\W)
=
\W - \dfrac{\langle \Z\A_j, \W \rangle_\textrm{F}}{\| \Z \A_j\|_\textrm{F}^2} \Z\A_j
\end{equation}
is an orthogonal projector.
\end{proposition}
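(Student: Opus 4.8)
The plan is to verify the three defining properties of an orthogonal projector from Section~\ref{sec:tools:subsec:background}, exactly mirroring the proof of Proposition~\ref{prop:projTE_h} but now in the matrix space $\IRmr$ equipped with the Frobenius inner product. First I would rewrite the projection map in operator form. Using the tensor-product identity~\eqref{formula:tensor} adapted to the Frobenius inner product, the map becomes
\[
\proj_{T_{\Z}\cM_j}(\W)
=
\W - \dfrac{\langle \Z\A_j, \W \rangle_\textrm{F}}{\| \Z \A_j\|_\textrm{F}^2} \Z\A_j
=
\bigg(\I - \dfrac{(\Z\A_j)^{\otimes 2}}{\|\Z\A_j\|_\textrm{F}^2}\bigg)\W
~\eqqcolon~ \P \W,
\]
where $\I$ is the identity operator on $\IRmr$. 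Casting everything in terms of the operator $\P$ reduces the three manifold-projector properties to purely algebraic facts about $\P$.

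Next I would check the range condition, $\textrm{Im}(\proj_{T_{\Z}\cM_j}) = T_{\Z}\cM_j$. Since $\Z \in \cM_j$ and $\A_j \neq \zeros$, we have $\Z\A_j \neq \zeros$, so the chain of equivalences
\[
\proj_{T_{\Z}\cM_j}(\W) = \W
~\iff~
\dfrac{\langle \Z\A_j, \W \rangle_\textrm{F}}{\|\Z\A_j\|_\textrm{F}^2}\,\Z\A_j = \zeros
~\iff~
\langle \Z\A_j, \W \rangle_\textrm{F} = 0
~\iff~
\W \in T_{\Z}\cM_j
\]
identifies the fixed-point set of $\P$ with the tangent space in~\eqref{def:TangentMatrixMan}. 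Then I would verify idempotency by squaring $\P$: the cross terms cancel because $\frac{(\Z\A_j)^{\otimes 2}}{\|\Z\A_j\|_\textrm{F}^2}$ is itself idempotent, since $\langle \Z\A_j, \Z\A_j\rangle_\textrm{F} = \|\Z\A_j\|_\textrm{F}^2$ normalizes the rank-one operator $(\Z\A_j)^{\otimes 2}$ to a projection, leaving $\P^2 = \P$.

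Finally, for orthogonality I would invoke the self-adjointness of $\P$ with respect to the Frobenius inner product: the rank-one operator $(\Z\A_j)^{\otimes 2}$ is symmetric (it sends $\W \mapsto \langle \Z\A_j, \W\rangle_\textrm{F}\,\Z\A_j$, which is manifestly self-adjoint), hence $\P^\top = \P$, and a symmetric idempotent operator is an orthogonal projector by~\cite[\textsection0.9.13]{horn2012matrix}. The only genuine subtlety---and the step I expect to need the most care---is the correct use of the tensor-product/outer-product notation in the matrix setting: here $(\Z\A_j)^{\otimes 2}$ denotes the self-adjoint rank-one operator $\W \mapsto \langle \Z\A_j, \W\rangle_\textrm{F}\,\Z\A_j$ on $\IRmr$ rather than a literal Kronecker product of matrices, so I would make sure the identity~\eqref{formula:tensor} is applied with $V = W = \IRmr$ and the Frobenius inner product throughout. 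Once this bookkeeping is fixed, the argument is formally identical to Proposition~\ref{prop:projTE_h}, with $\langle\cdot,\cdot\rangle$ replaced by $\langle\cdot,\cdot\rangle_\textrm{F}$ and $\A\bzeta$ replaced by $\Z\A_j$.
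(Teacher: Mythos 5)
Your proof is correct and matches the paper's approach: the paper proves Proposition~\ref{prop:orth_projMatrixMan} by stating that it ``follows the same arguments used in the proof of Proposition~\ref{prop:projTE_h},'' and your proposal carries out exactly those arguments (operator form via the rank-one tensor identity, fixed-point/range identification, idempotency, and Frobenius self-adjointness). Your explicit remark that $(\Z\A_j)^{\otimes 2}$ must be read as the self-adjoint rank-one operator $\W \mapsto \langle \Z\A_j, \W\rangle_\textrm{F}\,\Z\A_j$ on $\IRmr$ is precisely the bookkeeping the paper leaves implicit.
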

\begin{proof}
The proof follows the same arguments used in the proof of Proposition~\ref{prop:projTE_h}.
\end{proof}

% Retraction
\begin{proposition}[Retraction]\label{prop:retract_Mj} The map $\cR_{\Z}: T_{\Z}\cM_j\rightarrow \cM_j$ defined as
\begin{equation}\label{def:proj_map_matrix_man}
\cR_{\Z}(\W)
~=~
\dfrac{\W+\Z}{\|\Z+\W\|_{\A_j^{1/2}}}
\overset{T_{\Z}\cM_j}{=}
\dfrac{\Z+\W}{\sqrt{1 + \| \W\|_{\A_j^{1/2}}^2}}
,
\tag{$\cR$}
\end{equation}
is a metric retraction for the manifold $\cM_j$.
\end{proposition}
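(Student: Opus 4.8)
The plan is to verify directly the two defining conditions of a retraction recalled in \textsection\ref{sec:tools:subsec:background}: that the curve $c(t) \coloneqq \cR_{\Z}(t\W)$ satisfies $c(0) = \Z$ and $c'(0) = \W$. The argument runs in close parallel to the proof of Proposition~\ref{prop:restract_h} given in Appendix~\ref{app:proof_retract_h}, with the Euclidean norm $\|\cdot\|_\cE$ replaced by the $\A_j^{1/2}$-weighted Frobenius norm $\|\cdot\|_{\A_j^{1/2}}$.

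First I would justify the second equality stated in \eqref{def:proj_map_matrix_man}, namely that $\|\Z+\W\|_{\A_j^{1/2}}^2 = 1 + \|\W\|_{\A_j^{1/2}}^2$ whenever $\W \in T_{\Z}\cM_j$. Expanding $\langle \Z+\W, (\Z+\W)\A_j\rangle_\textrm{F}$ produces four terms. The term $\langle \Z, \Z\A_j\rangle_\textrm{F}$ equals $1$ since $\Z \in \cM_j$, and $\langle \W, \W\A_j\rangle_\textrm{F} = \|\W\|_{\A_j^{1/2}}^2$. The two cross-terms vanish: using $\A_j^\top = \A_j$ and the cyclic invariance of the trace, $\langle \Z, \W\A_j\rangle_\textrm{F} = \tr(\Z^\top\W\A_j) = \tr(\A_j\Z^\top\W) = \langle \Z\A_j, \W\rangle_\textrm{F}$, which is $0$ by the tangency condition in \eqref{def:TangentMatrixMan}; the other cross-term is equal to this one by symmetry of the Frobenius inner product. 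This same normalization also shows $\cR_{\Z}(\W) \in \cM_j$, since dividing by $\|\Z+\W\|_{\A_j^{1/2}}$ produces unit $\A_j^{1/2}$-norm, and the denominator $\sqrt{1 + t^2\|\W\|_{\A_j^{1/2}}^2}$ never vanishes.

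With the denominator in simplified form, the curve reads $c(t) = (\Z + t\W)/\sqrt{1 + t^2\|\W\|_{\A_j^{1/2}}^2}$, so $c(0) = \Z$ is immediate. I would then differentiate by the quotient rule; at $t = 0$ the contribution carrying $\Z$ is multiplied by $t$ and therefore drops out, leaving $c'(0) = \W$. Together these two identities establish that \eqref{def:proj_map_matrix_man} is a retraction onto $\cM_j$.

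I do not expect a genuine difficulty here, as the statement is essentially a matrix transcription of the ellipsoid case already handled in Appendix~\ref{app:proof_retract_h}. The only step requiring a little care is the vanishing of the cross-terms, where the symmetric factor $\A_j$ must be cycled through the trace to recognize the tangency condition $\langle \Z\A_j, \W\rangle_\textrm{F} = 0$; everything after that is elementary single-variable calculus.
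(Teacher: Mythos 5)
Your proposal is correct and takes essentially the same approach as the paper: both proofs verify the retraction conditions by checking that the curve $c(t)=\cR_{\Z}(t\W)$ satisfies $c(0)=\Z$ and $c'(0)=\W$ via elementary differentiation, in direct parallel with the ellipsoid case of Proposition~\ref{prop:restract_h}. The only difference is that you additionally prove the norm identity $\|\Z+\W\|_{\A_j^{1/2}}^2 = 1+\|\W\|_{\A_j^{1/2}}^2$ through the vanishing of the cross-terms under the tangency condition — a step the paper's statement asserts (via the annotation over the equality) but whose proof it omits — so your write-up is slightly more complete without departing from the paper's argument.
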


\begin{proof}
For a continuous curve $c:\IR\rightarrow\cM_j$ that is smooth for all 
$(\W, \Z) \in \IRmr\times \IRmr$, defined as
\[
c(t) 
~=~
\cR_{\Z}(t\W) 
~=~ 
\dfrac{\Z + t\W}{\sqrt{1 + t^2\| \W\|_{\A_j^{1/2}}^2}}.
\]
We have $c(0)=\Z$ and the following so \eqref{def:proj_map_matrix_man} is a retraction for $\cM_j$:
\[
c'(0) 
~\coloneqq~
\dfrac{d c(t)}{d t}\bigg|_{t=0}
~=~
\dfrac{\W-t\| \W\|_{\A_j^{1/2}}^2\Z}{(1+t^2\|\W\|_{\A_j^{1/2}}^2)\sqrt{1+t^2\|\W\|_{\A_j^{1/2}}^2}}\bigg|_{t=0} 
~=~
\W.
\]
\end{proof}

% n=2
\section{Details of manifold approach for W-subproblem with $n=2$}\label{sec:app:efficient_RgradW:n2}
Consider $n=2$ in \eqref{W-subproblem}.
By the fact that the Cartesian products of manifolds are manifolds, we consider product space $\cM_1 \times \cM_2$.
Define
\begin{equation}\label{eq:matrix_mantot}
\cM^{[2]}
=
\Big\{
\W\in\IRmr \big|
h(\W) = \zeros_2
\Big\}
\text{ where }
h:\IRmr\rightarrow\IR^2 :
\W \mapsto 
\begin{bmatrix}
\langle\W,\W\A_1\rangle_\textrm{F}-1
\\
\langle\W,\W\A_2\rangle_\textrm{F}-1
\end{bmatrix}.
\tag{Spectrahedra}
\end{equation}
We call the manifold $\cM^{[2]}$ ``spectrahedra'' as it is constructed by spectrahedron.
We now compute its tangent space.
Following arguments in \textsection\ref{sec:H} and \textsection\ref{sec:W:subsect:singleW}, we have the following results that we hide the proofs.
The function $h$ in \eqref{eq:matrix_mantot} has the differential $\textrm{D}h(\W)[\Z] = 2\begin{bmatrix}
\langle\W\A_1, \Z\rangle_\textrm{F}
\\
\langle\W\A_2, \Z\rangle_\textrm{F}
\end{bmatrix}$.
The tangent space of $\cM^{[2]}$ is the set 
$
T_{\Z}\cM^{[2]} 
\coloneqq
\Big\{ 
\W \,\big|\, \langle\W\A_1, \Z\rangle_\textrm{F} = \langle\W\A_2, \Z\rangle_\textrm{F} = 0
\Big\},
$
its adjoint is
\[
\textrm{D}h(\W)^*[{\balpha}]
~=~
2\alpha_1\W\A_1 + 2\alpha_2\W\A_2 
~=~
2\W (\alpha_1\A_1 + \alpha_2\A_2).
\]
Let \textrm{vec} be vectorization and 
let $\otimes_{\text{K}}$ be the Kronecker product,
now the $\balpha$ that minimizes 
$ \big\| \W - \textrm{D}h(\Z)^*[{\balpha}] \big\|_\textrm{F}^2$ also minimizes
\[
\begin{array}{lll}
 \Big\| \textrm{vec}\Big( 
\W - 2\Z(\alpha_1\A_1 + \alpha_2 \A_2)
\Big)
\Big\|_2^2
&=&
\displaystyle 
\Big\| \textrm{vec}(\W)  
- 2 (\I \otimes_{\text{K}} \Z ) \textrm{vec}
\Big(
\alpha_1 \A_1 + \alpha_2 \A_2
\Big)
\Big\|_2^2
\\
&=&\bigg\| \textrm{vec}( \W)  
- 2 (\I \otimes_{\text{K}} \Z ) 
\begin{bmatrix}
     \textrm{vec}\A_1 ~ \textrm{vec}\A_2
\end{bmatrix}
\begin{bmatrix}
\alpha_1
\\
\alpha_2
\end{bmatrix}
\bigg\|_2^2.
\end{array}
\]

To simplify the notation, let $\S_{\Z} = (\I \otimes_{\text{K}} \Z ) 
[\textrm{vec}\A_1 ~ \textrm{vec}\A_2]$, where the subscript $\Z$ indicates the dependence of $\Z$.
Now $\balpha$ is the root of the following normal equation 
\[
\S_{\Z}^\top \S_{\Z}
\begin{bmatrix}
    \alpha_1^*
    \\
    \alpha_2^*
\end{bmatrix}
~=~
\dfrac{1}{2}
\S_{\Z}^\top \textrm{vec}\W
~~\implies~~
\balpha^*
~=~
\dfrac{1}{2}(\S_{\Z}^\top\S_{\Z})^{-1}\S_{\Z}^\top
\textrm{vec}\W
~=~
\dfrac{1}{2}\S_{\Z}^\dagger
\textrm{vec}\W.
\]
Now the orthogonal projector $\proj_{T_{\Z}\cM^{[2]}}: \IRmn \mapsto \cM^{[2]}$ is 
\begin{equation}\label{M2:proj}
\proj_{T_{\Z}\cM^{[2]}}(\W)
~=~
\W-2\Z\sum_{j=1}^2
\dfrac{1}{2}\Big(\S_{\Z}^\dagger \textrm{vec}\W\Big)_j\A_j
~=~
\W  - 2 \Z (\alpha_1^* \A_1 + \alpha_2^* \A_2).
\end{equation}
The Riemannian gradient is then
\begin{equation}\label{M2:grad}
\grad F(\W)
~=~
2\proj_{T_{\Z}\cM^{[2]}}\Big( \nabla F(\W) \Big)
~\overset{\eqref{M2:proj}}{=}~
\nabla F(\W) - 2 \Z (\alpha_1^* \A_1 + \alpha_2^* \A_2).    
\end{equation}
Note that the value of $\alpha_1^*$ and $\alpha_2^*$ in
\eqref{M2:grad} 
is an implicit function of $\Z$ and $\nabla F(\W)$ as we have that
$
\begin{bmatrix}
\alpha_1^*
\\
\alpha_2^* 
\end{bmatrix}
=
 \dfrac{1}{2}
\begin{bmatrix}
\Big(
\S_{\Z}^\dagger
\textrm{vec}\nabla F(\W)
\Big)_1 
\\
\Big(
\S_{\Z}^\dagger
\textrm{vec}\nabla F(\W) 
\Big)_2
\end{bmatrix}
$
.
So the explicit expression of the Riemannian gradient is 
\[
\grad F(\W)
~=~
\nabla F(\W) - 
\Z
\Big(
\S_{\Z}^\dagger
\textrm{vec}\nabla F(\W)
\Big)_1  \A_1
-
\Z
\Big(
\S_{\Z}^\dagger
\textrm{vec}\nabla F(\W)
\Big)_2 \A_2
.
\]
Then we compute $\grad^+ F = \max\{\grad F, \zeros\}$ and $\grad^- F = \max\{-\grad F, \zeros\}$ to proceed with RMU.

In conclusion, we can see that to run RMU on $\cM^{[2]}$, there are challenges: 
\begin{itemize}
\item the computation of $\alpha^*$, which includes the computation of $\S_{\Z} \in \IR^{m^2 \times 2}$, 
$\S_{\Z}^{\dagger} \in \IR^{ 2 \times 2}$, in which all these terms have to be re-computed in each iteration.

\item the computation of the metric projection onto $\cM^{[2]}$ itself is a difficult problem.
\end{itemize}

\backmatter

\bmhead{Acknowledgements}
FE is member of the Gruppo Nazionale Calcolo Scientifico - Istituto Nazionale di Alta Matematica (GNCS-INdAM) and she would like to thank Prof. Nicoletta Del Buono from University of Bari Aldo Moro for scientific discussion during this project.

\section*{Declarations}

\begin{itemize}
\item Funding:
F.E. is supported by ERC Seeds Uniba project ``Biomes Data Integration with Low-Rank Models'' (CUP H93C23000720001), Piano Nazionale di Ripresa e Resilienza (PNRR), Missione 4 ``Istruzione e Ricerca''-Componente C2 Investimento 1.1, ``Fondo per il Programma Nazionale di Ricerca e Progetti di Rilevante Interesse Nazionale'', Progetto PRIN-2022 PNRR, P2022BLN38, Computational approaches for the integration of multi-omics data. CUP: H53D23008870001, by INdAM - GNCS Project ``Ottimizzazione e Algebra Lineare Randomizzata'' (CUP E53C24001950001).

\item Code availability: 
MALTAB code is available at \url{https://github.com/flaespo/Chordal_NMF}

\item Author contribution: 
All authors contribute equally to this work.
\end{itemize}

\bibliography{refs}
\end{document}